\documentclass{amsart}

\usepackage{pb-diagram}
\usepackage{amssymb}

\def\A{\mathcal {A}}

\def\S{\mathcal {S}}

\def\Fe{\mathcal {F}}
\def\R{\mathbf{R}}
\def\C{\mathbf{C}}
\def\F{\mathbf{F}}

\def\N{\mathbf{N}}

\def\K{\mathcal{K}}
\def\E{\mathcal{E}}
\def\X{\mathcal{X}}
\def\a{\alpha}

\def\V{\vartheta}
\def\w{\varpi}
\def\p{\partial}
\def\pp#1#2{\frac{\p #1}{\p #2}}
\def\PB{\left\{\cdot\,,\cdot\right\}}
\def\pb#1{\left\{#1\right\}}
\def\LB{\left[\cdot\,,\cdot\right]}
\def\Lb#1{\left[\cdot,#1\right]}
\def\lB#1{\left[#1,\cdot\right]}
\def\lb#1{\left[#1\right]}
\def\ideal#1{\langle #1\rangle}

\def\Hom{\mathop{\rm Hom}\nolimits}

\def\div{\mathop{\rm Div}\nolimits}
\def\Vect{{\mathfrak{X}}}

\def\de{\delta}

\def\v{\varphi}
\def\vn{\vec{\nabla}}
\def\we{\wedge}
\def\s{\sigma}
\def\ds{\displaystyle}
\def\Im{\mathop{\rm Im}\nolimits}
\def\Ker{\mathop{\rm Ker}\nolimits}

\def\mod{\mathop{\rm mod}\nolimits}
\def\half {{1\over 2}}  
\def\aa{{\bf a}}
\def\bb{{\bf b}}

\def\bc{\bar c}
\def\ad{\mathop{\rm ad}\nolimits}
\def\Ad{\mathop{\rm Ad}\nolimits}

\newtheorem{thm}{Theorem}[section]
\newtheorem{prp}[thm]{Proposition}
\newtheorem{cor}[thm]{Corollary}
\newtheorem{lma}[thm]{Lemma}
\theoremstyle{definition}

\newtheorem{rem}[thm]{Remark}

\newenvironment{eqn*}[1][1.5]
  {$$\renewcommand{\arraystretch}{#1}
      \begin{array}{rcl}}
      {\end{array}$$}

\newenvironment{eqn}[2][1.5]
  {\begin{equation}\label{#2}
   \renewcommand{\arraystretch}{#1}
   \begin{array}{rcl}}
  {\end{array}\end{equation}}

%

\setcounter{tocdepth}{1}

\begin{document}
\nocite{*}

\title{Formal deformations of Poisson structures in low dimensions}
\author{Anne Pichereau}
  \address{Centre de Recerca Matem\`atica,
           Apartat 50,
           E-08193 Bellaterra,
           Spain}
  \email{APichereau@crm.cat}
\thanks{The author was supported by
 a grant of the research network LIEGRITS, in the Marie Curie Research Training Network MRTN-CT
 2003-505078, and a grant of
 the EPDI (European Post-Doctoral Institut).}
\keywords{Deformations, Poisson structures, Poisson cohomology}
\subjclass[2000]{17B63, 58H15, 17B55}
\begin{abstract}
In this paper, we study formal deformations of Poisson
structures, especially for three families of Poisson varieties in
dimensions two and three. For these families of Poisson structures,
using an explicit basis of the second Poisson cohomology space, we
solve the deformation equations at each step and obtain a
large family of formal deformations for each Poisson structure which
we consider. With the help of an explicit formula, we show that this
family contains, modulo equivalence, all possible formal
deformations. We show moreover that, when the Poisson structure is
generic, all members of the family are non-equivalent.
\end{abstract}
\maketitle
\tableofcontents

\section{Introduction}
Poisson structures first have been introduced 
in the realm of classical mechanics, by D. Poisson. Indeed, he
discovered in $1809$
the natural symplectic structure on~$\R^{2r}$. This
structure permits one to write Hamilton's equations in a more
natural way, with positions and momenta playing symmetric roles. This
symplectic structure is, in a sense, the most simple example of a
Poisson structure and it takes the following form:
\begin{eqnarray}\label{first_poisson}
\pb{F,G}=\sum_{i=1}^r \left(\pp{F}{q_i}\pp{G}{p_i}-\pp{F}{p_i}\pp{G}{q_i}\right),
\end{eqnarray}
for smooth functions $F,G$ on $\R^{2r}$. 
In $1839$, C. Jacobi showed that this bracket satisfies the now called
Jacobi identity:
\begin{equation}\label{jacobi}
\pb{\pb{F,G},H}+\pb{\pb{G,H},F}+\pb{\pb{H,F},G}=0,
\end{equation}
thereby explaining Poisson's theorem: the bracket of two constants
of motion is a constant of motion.
In general, one defines a \emph{Poisson structure} on an associative commutative
algebra $(\A,\cdot)$, over a field $\F$, as being a
Lie algebra structure on~$\A$, $\PB: \A\times\A\to\A$, which is a
\emph{biderivation} of $\A$, i.e., satisfies the derivation
property in each of its arguments:
\begin{equation}\label{leibniz_intro}
\pb{F\cdot G,H}=F\cdot \pb{G,H} + G\cdot\pb{F,H}, 
  \quad\hbox{for all } F,G,H\in\A.
\end{equation}
A smooth manifold $M$ is said to be a \emph{Poisson manifold} if its
algebra of smooth functions $C^\infty(M)$ is equipped with a
Poisson structure.

\smallskip

Poisson structures are also inherent in quantum mechanics,
since it was observed by P. Dirac that, up to a factor $2 i \pi/h$, 
the commutator of observables, appearing in the work of
W. Heisenberg,
is the analogue of the Poisson bracket~(\ref{first_poisson}) 
of classical mechanics.
They also play an
important role in the theory of deformation quantization, which is
linked to quantum mechanics, as shown in \cite{BFFLS}. Translated in
a mathematical language, this theory is the study of 
deformations of associative, commutative algebras. 
In $1997$, 
M. Kontsevich proved that, given a Poisson manifold $(M,\PB)$, 
the equivalence classes of formal
deformations of the Poisson structure $\PB$ correspond to 
the equivalence classes of formal deformations of
the associative product of $C^\infty(M)$.
This result underlies the importance of formal deformations of Poisson
structures, which is the subject of the present paper.

\smallskip

Let $(\A,\PB)$ be a Poisson algebra over $\F$. A \emph{formal deformation} of
$\PB$ (see \cite{stern} and \cite{gutt}) 
is a map $\pi_* :\A[[\nu]]\times\A[[\nu]]\to\A[[\nu]]$ which extends $\PB$:
$$
\pi_*=\PB +\pi_1\nu +\pi_2\nu^2 + \cdots +\pi_n\nu^n+\cdots,
$$
where each map $\pi_i:\A\times\A\to\A$ is a skew-symmetric
biderivation of $\A$, and which makes $(\A[[\nu]],\pi_*)$ into a Poisson
algebra over the ring $\F[[\nu]]$, where the associative product on
$\A[[\nu]]$ is the one inherited from the initial one on $\A$. 
To simplify the notation and to emphasize the fact that the Poisson
structure $\PB$ is the first term of $\pi_*$, we also denote it by $\pi_0$. 
Notice that, similarly to the associative product, each skew-symmetric
biderivation of $\A$ (like the $\pi_i$) can be seen as a map
$\A[[\nu]]\times \A[[\nu]]\to\A[[\nu]]$, by considering its extension by
$\F[[\nu]]$-bilinearity.
In particular, such an extension of $\pi_0$ is a formal deformation of $\pi_0=\PB$,
but we stress that our goal is to consider \emph{all} formal
deformations of $\pi_0$ and not only the one obtained in this way.
If one works modulo $\nu^{n+1}$, then one speaks of an \emph{$n$-th order
  deformation}.
Deformations are always studied up to equivalence, two
formal deformations $\pi_*$ and $\pi'_*$ being \emph{equivalent} if
there exists a morphism $\Phi:(\A[[\nu]],\pi_*)\to(\A[[\nu]],\pi'_*)$
of Poisson algebras over $\F[[\nu]]$ which is the identity
modulo $\nu$.  

\smallskip

Studying deformations of a Poisson structure $\PB$ means studying
the following questions:
\begin{enumerate}
\item[$(Q_1)$] Rigidity: Do there exist non-trivial formal deformations of $\PB$?
\item[$(Q_2)$] Extendibility: Given an $n$-th order deformation of $\PB$, does it extend to an
  $(n+1)$-th order deformation? 
\item[$(Q_3)$] Formula: Is it possible to obtain an explicit formula
  for all formal / $n$-th order 
deformations of $\PB$ (up to equivalence)?
\item[$(Q_4)$] Properties: Which properties of the Poisson bracket $\PB$
  are stable under deformation? 
\end{enumerate}
In general, the deformation theory of a structure (an associative or a
Lie product, for example)
is governed by an associated cohomology, which provides some tools to
give an answer to the questions $(Q_1)$ --- $(Q_4)$.
In the particular Poisson case, the cohomology
which plays this role is \emph{Poisson cohomology} (introduced in
\cite{lichne}, see also \cite{huebs} for an algebraic approach). 
For a Poisson algebra $(\A,\pi_0=\PB)$, 
the Poisson complex (which will be explained in Paragraph \ref{Prelim}) 
is defined on the
space $\Vect^\bullet(\A)$ of all skew-symmetric multiderivations of
$\A$ (in particular, $\pi_0\in\Vect^2(\A)$). For $k\in\N$, the $k$-th Poisson
cohomology space is then denoted by $H^k(\A,\pi_0)$.

\smallskip

As we will see in Paragraph \ref{Prelim}, the third Poisson
cohomology space $H^3(\A,\pi_0)$ appears naturally in the construction
of formal deformations of $\pi_0$: a 
map of the form $\pi_*=\sum_{n\in\N}\pi_n\nu^n
:\A[[\nu]]\times\A[[\nu]]\to\A[[\nu]]$ is a formal deformation of
$\pi_0$ if and only if each $\pi_n$ ($n\in\N^*$) is a skew-symmetric
biderivation of $\A$ which satisfies a certain cohomological
equation in $H^3(\A,\pi_0)$.
That is why one refers to
$H^3(\A,\pi_0)$ as being the set of obstructions to deformations of $\pi_0$.
The second Poisson cohomology space
$H^2(\A,\pi_0)$ plays also a fundamental role in this study. Indeed, if 
$\pi_n\in\Vect^2(\A)$ is a solution of the equation mentionned above, 
then $\pi'_n = \pi_n+P$, where
$P$ is any $2$-cocycle, is also a solution, but if in particular $P$ is
a $2$-coboundary, then the corresponding $\pi'_n$ gives rise to a
($n$-th order) deformation, equivalent to the one obtained with $\pi_n$. Hence, the
choice at each step of the construction of $\pi_*$ is a choice in $H^2(\A,\pi_0)$.
The difficulty for constructing a formal deformation of $\pi_0$ can
now be explained as follows: even if, at one step, one finds a
solution for the cohomological equation mentionned above, the choice
(in $H^2(\A,\pi_0)$), which one has to 
make at this step, changes the cohomological equations (in
$H^3(\A,\pi_0)$) which one will have to solve at each of the following
steps. Now, depending on the choices that have been done previously,
the cohomological equation at one step can even be solvable or not! This
explains why, in general, it is difficult, even with a precise knowledge of the
corresponding cohomology, to answer the above questions $(Q_1)$ --- $(Q_4)$.

\smallskip

In the first part of this paper (Section \ref{sec:part_case}), we prove
a proposition which gives, for a class of Poisson structures, a system of
representatives for all formal deformations, modulo equivalence. We
formulate it here for the case of formal deformations, even if it is equally
valid for the case of $n$-th order deformations.
\begin{prp}\label{thmintro}
Let $(\A,\pi_0)$ be a Poisson algebra. 
Denote by $(\vartheta_k\in\Vect^2(\A))_{k\in \K}$, a set of
$2$-cocycles, whose cohomology classes form a basis of $H^2(\A,\pi_0)$.
Define~$\S$, the set of all
  $\aa=(a_n^k\in\F)_{{k\in\K}\atop{n\in\N^*}}$, such that, for every
  $n_0\in\N^*$, the sequence $(a_{n_0}^k)_{k\in\K}$ has a
  finite support.

\vspace*{0.5cm}
Suppose that, to each
  $\aa=(a_n^k)_{{k\in\K}\atop{n\in\N^*}}$, element of $\S$, is associated
  a sequence $\left(\Psi_n^\aa\in\Vect^2(\A)\right)_{n\in\N^*}$ of skew-symmetric
  biderivations of $\A$, satisfying:
\begin{enumerate}
\item[$\bullet$] The skew-symmetric biderivation
  $\Psi_1^\aa$ of $\A$ is zero: $\Psi_1^\aa=0$;
\item[$\bullet$] For all $n\in\N^*$, $\Psi_n^\aa$ only depends on the
$a^k_m$, with $k\in\K$ and $1\leq m \leq n-1$;
\item[$\bullet$] The skew-symmetric biderivation of $\A[[\nu]]$,
  defined by
\begin{equation*}
 \pi_*^\aa := \pi_0 + \sum_{n\in\N^*} \left(\Psi_n^\aa 
+ \sum_{k\in\K} a^k_n\;\vartheta_k \right)\nu^n,
\end{equation*}
is a formal deformation of $\pi_0$.
\end{enumerate}
Then,
\begin{enumerate}
\item[(a)] For every formal deformation $\pi_*$ of $\pi_0$, there exists
an element $\aa=(a_n^k)_{{k\in\K}\atop{n\in\N^*}}$ of $\S$, such that $\pi_*$ is
equivalent to $\pi_*^\aa$;
\item[(b)] If, in addition, the first Poisson cohomology space $H^1(\A,\pi_0)$ is
\emph{zero}, then the element $\aa\in\S$, whose existence is
mentionned in \emph{(a)}, is unique. 
\end{enumerate}
\end{prp}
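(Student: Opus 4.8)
The plan is to prove both statements by induction on the order $n$, leveraging the standard fact (to be established in Paragraph \ref{Prelim}) that formal deformations are constructed step by step, the obstruction at each step living in $H^3(\A,\pi_0)$ and the freedom of choice at each step being a class in $H^2(\A,\pi_0)$. For part (a), let $\pi_*=\pi_0+\sum_{n\geq 1}\pi_n\nu^n$ be an arbitrary formal deformation. I would build simultaneously the coefficients $a_n^k\in\F$ and a formal equivalence $\Phi$ between $\pi_*$ and $\pi_*^\aa$, working modulo $\nu^{n+1}$ and increasing $n$. At stage $n$, having already arranged (via the partially-constructed $\Phi$) that $\pi_*$ agrees with $\pi_*^\aa$ up to order $n-1$, the deformation equations at order $n$ for the two structures differ only by a $2$-cocycle; since $\Psi_n^\aa$ is already determined by $a_m^k$ for $m\leq n-1$, the discrepancy $\pi_n-\Psi_n^\aa$ is a $2$-cocycle, so its cohomology class can be written $\sum_{k\in\K}a_n^k[\vartheta_k]$ for a unique finite family $(a_n^k)_{k\in\K}$ (uniqueness here being just the fact that the $[\vartheta_k]$ form a basis), and the coboundary part $\pi_n-\Psi_n^\aa-\sum_k a_n^k\vartheta_k=\diff_{\pi_0}X_n$ is absorbed by extending $\Phi$ using the exponential of (the extension by $\F[[\nu]]$-linearity of) $\nu^n X_n$ — or rather $\exp(\nu^n \ad_{X_n})$-type gauge transformation. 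This produces the required $\aa\in\S$ with $\pi_*\sim\pi_*^\aa$; one must check the supports are finite, which is immediate since at each fixed order only finitely many $a_n^k$ are nonzero by construction.

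For part (b), assuming $H^1(\A,\pi_0)=0$, I would show the $\aa$ produced in (a) is forced. Suppose $\pi_*^\aa$ and $\pi_*^{\aa'}$ are equivalent via some $\Phi$ which is the identity modulo $\nu$; I want $\aa=\aa'$. Argue by induction on $n$ that $a_m^k=(a')_m^k$ for all $m\leq n$. Granted equality up to order $n-1$, at order $n$ the equivalence $\Phi$, which up to order $n-1$ can be taken trivial (after correcting by the inductively-built gauge) and thus is $\Id+\nu^n\,\phi_n+\cdots$, intertwines $\pi_*^\aa$ and $\pi_*^{\aa'}$; extracting the order-$\nu^n$ term of the morphism condition shows that the difference of the two order-$n$ coefficients, namely $\sum_k(a_n^k-(a')_n^k)\vartheta_k$ (the $\Psi_n$ terms coincide, since they depend only on lower-order data which agrees), equals $\diff_{\pi_0}\phi_n$ for a derivation $\phi_n$, i.e.\ is a $2$-coboundary. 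Hence $\sum_k(a_n^k-(a')_n^k)[\vartheta_k]=0$ in $H^2(\A,\pi_0)$, and since the $[\vartheta_k]$ are linearly independent, $a_n^k=(a')_n^k$ for all $k$. The role of $H^1(\A,\pi_0)=0$ is to guarantee that the gauge transformation $\phi_n$ can be chosen consistently — more precisely, it ensures there is no ambiguity in the lower-order normalization so that the inductive step genuinely isolates a coboundary, and it rules out nontrivial automorphisms that could otherwise shift the coefficients.

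The main obstacle I anticipate is bookkeeping the interaction between the gauge transformation $\Phi$ and the nonlinear (bracket) part of the deformation equations across orders: composing the successive exponential corrections $\exp(\nu^n\ad_{X_n})$ into a single well-defined formal morphism, and verifying that re-expressing $\pi_*$ in the new gauge does not disturb the already-matched lower-order terms while transforming the order-$n$ term exactly by the coboundary $\diff_{\pi_0}X_n$ plus higher-order junk. This is where the hypotheses on $\Psi_n^\aa$ — vanishing at $n=1$ and dependence only on strictly lower indices — are essential: they make the recursion well-posed, since at order $n$ the ``new'' unknowns are precisely the $a_n^k$ and nothing from $\Psi_n^\aa$ feeds back. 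I would handle the formal-convergence/well-definedness of the infinite composition by the usual $\nu$-adic argument: each corrective factor is $\Id$ modulo $\nu^n$, so the composition stabilizes order by order. With that in place, both (a) and (b) follow from linear algebra in $H^2(\A,\pi_0)$ together with the cohomological interpretation of the deformation equations recalled in Paragraph \ref{Prelim}.
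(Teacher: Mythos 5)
Your proposal follows essentially the same route as the paper: part (a) is the paper's Proposition \ref{magic}, an induction on the order in which the order-$n$ discrepancy is a $2$-cocycle (via equation (\ref{eqn:eq_cohom})), decomposed into basis classes plus a coboundary that is absorbed by an exponential gauge transformation, and part (b) is the paper's Propositions \ref{prp:H1nul} and \ref{thm}. The only point where you are substantially less precise is the claim in (b) that the equivalence morphism ``can be taken trivial up to order $n-1$'': the paper justifies exactly this with Lemmas \ref{lma:xiformalcocycle} and \ref{lma:H1H1formal}, showing that the generator $\xi$ of the equivalence is a $1$-cocycle modulo $\nu^n$ for the truncated \emph{deformed} bracket and that $H^1(\A,\pi_0)=0$ forces $H^1$ of that truncated deformed algebra to vanish as well, so $\xi$ may be corrected by a $1$-coboundary (which acts trivially on $\pi_*$) to vanish modulo $\nu^n$.
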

We stress that not only the space $H^3(\A,\pi_0)$ 
(implicitly in the existence of the family  $(\pi_*^\aa)_{\aa\in\S}$) and
 the space $H^2(\A,\pi_0)$ (explicitly in the writing of the family
 $(\pi_*^\aa)_{\aa\in\S}$) are involved in this proposition, but also
 $H^1(\A,\pi_0)$.

\smallskip

The hypotheses in the previous proposition are strong, 
but in a second part of this paper (Section \ref{sec:dim3}), 
we will show that they are satisfied for 
several large families of Poisson structures in low dimensions. We
will do that, for each family, by using an explicit basis of
$H^2(\A,\pi_0)$ and by constructing an explicit formula for suitable
$\Psi_n^\aa$, which means solving the 
cohomological equations in $H^3(\A,\pi_0)$, that govern the
extendibility of deformations.

We first consider a big family of Poisson structures that
equip $\A:=\F[x,y,z]$, the algebra of regular (polynomial) functions
on the affine space of dimension three, $\F^3$. Indeed, to each polynomial $\v\in\A$, one can
associate a Poisson structure $\PB_\v$ on $\A$, defined
by the brackets:
\begin{equation}\label{vcrochet}
\pb{x,y}_\v = \pp{\v}{z}, \quad \pb{y,z}_\v = \pp{\v}{x}, \quad \pb{z,x}_\v = \pp{\v}{y}.
\end{equation}
Notice that this Poisson structure appears for example as the transverse 
Poisson structure to a subregular nilpotent orbit of a Lie algebra
(see \cite{PHP}).

In \cite{AP}, we have
already obtained explicit bases for the spaces $H^1(\A,\PB_\v)$ 
and $H^2(\A,\PB_\v)$, in case the
polynomial $\v$ is (weight) homogeneous with an isolated singularity, i.e.,
when the surface $\Fe_\v:\lbrace \v= 0\rbrace$ (the singular locus of
$\v$) is given by a (weight) homogeneous equation and
admits an isolated singularity (at the origin). In Section \ref{sec:dim3}, we
will use these results to show that, after a change of basis of $H^2(\A,\PB_\v)$, we are
able to exhibit a family of skew-symmetric biderivations~$\Psi_n^\aa$
of $\A$ which satisfy the conditions of Proposition
\ref{thmintro}. Since we obtain in fact an explicit formula for
every $\Psi_n^\aa$, the proposition \ref{thmintro} permits us to
write an explicit formula for all formal deformations of $\PB_\v$,
up to equivalence. More precisely, we have the following proposition 
(see Proposition \ref{prp:defo_A_v}), given here in a formal context
although it is also valid for $n$-th order deformations.
\begin{prp}\label{thmintro_v}
Let $\v\in\A=\F[x,y,z]$ be a weight homogeneous polynomial with an isolated
singularity. Consider the Poisson algebra $(\A,\PB_\v)$, 
where $\PB_\v$ is the Poisson bracket given by \emph{(\ref{vcrochet})}.
\begin{enumerate}
\item[(a)] There exist skew-symmetric biderivations $\Psi_n^\aa$ of
  $\A$ (for which we have explicit formulas), 
satisfying the hypoheses of Proposition \ref{thmintro}, for $(\A,\PB_\v)$. 
\item[(b)] The Poisson algebra $(\A,\PB_\v)$ satisfies the particular conditions of
  item~\emph{(b)} of Proposition \ref{thmintro}, unless the (weighted) degree of $\v$ equals
  the sum of the weights of the variables $x,y,z$. 
\end{enumerate}
\end{prp}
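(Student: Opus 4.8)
The plan is to prove Proposition~\ref{thmintro_v} by invoking the explicit description of the Poisson cohomology spaces $H^1(\A,\PB_\v)$ and $H^2(\A,\PB_\v)$ from \cite{AP} and feeding it into the machinery of Proposition~\ref{thmintro}. First I would recall the explicit bases: since $\v$ is weight homogeneous with an isolated singularity, the spaces $H^1(\A,\PB_\v)$, $H^2(\A,\PB_\v)$ and $H^3(\A,\PB_\v)$ are finite-dimensional and described in terms of the Milnor algebra (the Jacobian quotient $\A/\langle \p_x\v,\p_y\v,\p_z\v\rangle$) together with a one-dimensional Euler-type contribution. In particular $H^1(\A,\PB_\v)$ is spanned, up to a correction term, by (the class of) the Euler vector field, and it vanishes exactly when the weighted degree of $\v$ differs from the sum of the weights $\varpi_x+\varpi_y+\varpi_z$. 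This last point is precisely the dichotomy in item~(b), so once the rest is in place (b) is essentially immediate from item~(b) of Proposition~\ref{thmintro}.

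For item~(a), the main work is to produce, for each $\aa\in\S$, the sequence $(\Psi_n^\aa)_{n\in\N^*}$ satisfying the three bulleted conditions. I would proceed inductively on $n$. Start with $\Psi_1^\aa=0$, which is forced. At step $n$, assuming $\pi_0+\sum_{m=1}^{n-1}(\Psi_m^\aa+\sum_k a_m^k\vartheta_k)\nu^m$ is an $(n-1)$-th order deformation, the obstruction to extending it to order $n$ is a $3$-cocycle built (via the graded Lie bracket on $\Vect^\bullet(\A)$, the Schouten bracket) from the lower-order terms; explicitly it is $\frac12\sum_{i+j=n,\,i,j\geq1}[\pi_i,\pi_j]_S$ where $\pi_i=\Psi_i^\aa+\sum_k a_i^k\vartheta_k$. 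The deformation extends iff this class vanishes in $H^3(\A,\PB_\v)$, and then $\Psi_n^\aa$ is chosen as a primitive (a $2$-cochain $P$ with $[\pi_0,P]_S$ equal to that cocycle), while the freedom of adding a $2$-cocycle is exactly what the term $\sum_k a_n^k\vartheta_k$ records. The crux is therefore to show the step-$n$ obstruction always vanishes, and to do so with an \emph{explicit} choice of primitive $\Psi_n^\aa$ depending only on $a_m^k$ for $m\le n-1$. Here I would exploit the special algebraic structure of $\PB_\v$: the bracket is, up to the Jacobian map, the curl/divergence operator on $\F^3$, so the Schouten brackets of the cocycles $\vartheta_k$ among themselves and against $\PB_\v$ can be computed in closed form in terms of the polynomials representing the cohomology classes and of $\v$ itself. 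One shows that these brackets land in the image of $[\pi_0,\cdot]_S$ and writes down the preimage explicitly — this is where a good (possibly modified) choice of basis $(\vartheta_k)$ of $H^2$ is essential, and it is exactly the ``change of basis of $H^2(\A,\PB_\v)$'' alluded to in the text before the proposition. The modified basis should be chosen so that products $\vartheta_k\wedge(\text{something})$ close up nicely, making the obstruction manifestly exact.

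The main obstacle I expect is precisely the closed-form evaluation and inversion at the inductive step: one must control $\sum_{i+j=n}[\pi_i,\pi_j]_S$, a quadratic expression in the unknowns $(\Psi_m^\aa)$ and $(a_m^k)$, and exhibit a canonical primitive. The cleanest route is to first prove a structural lemma: for the family $\PB_\v$, any $2$-cochain of the form $[\theta,\theta']_S$ with $\theta,\theta'$ representing classes in $H^2$ is a $\PB_\v$-coboundary, together with a \emph{formula} for a primitive (this uses the isolated-singularity hypothesis, which guarantees $H^3$ is spanned by the ``top'' class of the Milnor algebra and lets one check exactness degree by degree in the weighted grading). Granting that lemma, an induction produces the $\Psi_n^\aa$ with the required dependence, each $\pi_*^\aa$ is a genuine formal deformation by construction, and item~(a) follows. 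Item~(b) then follows by combining (a) with parts~(a)--(b) of Proposition~\ref{thmintro} and the vanishing criterion for $H^1(\A,\PB_\v)$ recalled above: $H^1=0$ precisely when $\deg_\w(\v)\neq\w_x+\w_y+\w_z$, and in that case the parameter $\aa\in\S$ attached to a given formal deformation is unique.
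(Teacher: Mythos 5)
Your overall strategy is the right one, and part (b) is handled exactly as in the paper: $H^1(\A,\PB_\v)\simeq\F[\v]\,\vec e_\w$ when $\w(\v)=|\w|$ and vanishes otherwise (Proposition 4.5 of \cite{AP}), so the dichotomy in item (b) is immediate from Proposition \ref{thm}. The problem is in part (a), which is where all the mathematical content lies. Your plan is to run an induction on the order $n$, showing at each step that the obstruction $\frac12\sum_{i+j=n}\lb{\pi_i,\pi_j}_S$ is exact, and you propose to deduce this from a structural lemma asserting that $\lb{\theta,\theta'}_S$ is a $\PB_\v$-coboundary (with an explicit primitive) whenever $\theta,\theta'$ \emph{represent classes in} $H^2(\A,\PB_\v)$. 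That lemma does not suffice to close your induction: for $n\geq 3$ the obstruction contains terms $\lb{\Psi_i^\aa,\vartheta_k}_S$ and $\lb{\Psi_i^\aa,\Psi_j^\aa}_S$, and the primitives $\Psi_i^\aa$ you constructed at earlier steps are \emph{not} $2$-cocycles (indeed $\delta^2_\v(\v^l u_i\vn u_r)=-\lb{\v^l u_i\vn\v,\vn u_r}_S\neq0$ in general), so they are not covered by your lemma. Without controlling these cross-terms — in particular without knowing that the primitives stay inside a class of biderivations whose Schouten brackets you can still evaluate — the induction does not go through as stated.

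The paper closes exactly this gap by working with a \emph{closed-form} ansatz rather than a recursion. After rewriting the basis of $H^2(\A,\PB_\v)$ as $\{\v^l u_j\vn\v\}\cup\{\vn u_r\}$ (Lemma \ref{rewriting_h2}), it sets
$$
\Psi_n^\aa=\sum_{(l,i),\,r}\ \sum_{a+b=n,\ a,b\geq1}c^a_{l,i}\,\bar c^{\,b}_r\ \v^l u_i\,\vn u_r,
$$
so that every term of every $\pi_n$ is of the form $F\vn G$ with $F,G\in\A$. For such biderivations the Schouten bracket has the explicit expression (\ref{eq:beleq}), from which one gets: brackets of two $u\vn\v$-type terms and of two $\vn u$-type terms vanish identically (\ref{equal1}); the cross-bracket is an explicit coboundary, $\lb{\v^l u_i\vn\v,\vn u_r}_S=\delta^2_\v(\v^l u_i\vn u_r)$ (\ref{eqn:last}); and all remaining contributions to $\frac12\sum\lb{\pi_i,\pi_j}_S$ (the ones involving the $\Psi$-terms, i.e.\ your problematic cross-terms) cancel in pairs by the antisymmetry of (\ref{eq:beleq}) under exchange of the two arguments. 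The deformation equation (\ref{eqn:eq_cohom}) is then verified directly at every order, with no appeal to the structure of $H^3(\A,\PB_\v)$ at all. If you want to salvage your inductive scheme, the fix is to strengthen your structural lemma so that it applies to the whole $\F$-span of $\{\v^l u_i\vn u_r\}$ together with the cocycle representatives, and to check that your chosen primitives remain in that span — at which point you have essentially reconstructed the paper's closed formula.
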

At that point, we have obtained a clear answer to the question $(Q_1)$ and
$(Q_3)$ above (questions of rigidity and formula). Because Proposition \ref{thmintro} is
also true for $n$-th order deformations, we also have an answer to the
question $(Q_2)$ of extendibility, which is the following: \textit{Every $n$-th order
deformation of $\PB_\v$ extends to a $(n+1)$-th order deformation}
(Corollary \ref{extension_v}).

Finally, using the explicit formula mentionned above (for all formal deformations of the
bracket $\PB_\v$), we will also give a partial answer to the question
$(Q_4)$ of the properties stable under deformation, with the following
result: 
\textit{The formal deformations of $\PB_\v$ all admit formal Casimirs}, for which
we also have an explicit writing (Corollary \ref{casimir_v}).

\smallskip

The polynomial $\v\in\F[x,y,z]$ is a Casimir for the
Poisson structure $\PB_\v$, so that this Poisson structure restricts
to a Poisson structure $\PB_{\A_\v}$, 
on the quotient algebra $\A_\v := \frac{\F[x,y,z]}{\ideal{\v}}$, which
is the algebra of regular functions on the surface $\Fe_\v:\lbrace
\v=0\rbrace\subset \F^3$. The deformations of the Poisson structure $\PB_{\A_\v}$ 
are studied in Paragraph \ref{sbssec:A_v}. In fact, under the previous
hypotheses on $\v$, the cohomological equations mentionned above are
in this case trivial and this fact, together with an explicit basis of
the second Poisson cohomology space (obtained in \cite{AP}), permit us to give an
explicit formula for all formal deformations of $\PB_{\A_\v}$, up to
equivalence (see Proposition \ref{prp:defo_Av}).

\bigskip

\textit{Acknowledgments:} 
I wish to take this opportunity to thank P. Vanhaecke for drawing my
attention to these questions about deformations and for useful
discussions about this subject. 
I also would like to thank 
B. Fresse, C. Laurent-Gengoux, M. Penkava, R. Yu and N. T. Zung for valuable
conversations which contributed to this paper.
The hospitality of the University of Antwerp and of the 
CRM (Centre de Recerca Matematic\`a, Barcelona) 
is also greatly acknowledged.

\section{Conditions for a system of representatives for all formal
  deformations}
\label{sec:part_case}

In this part, we want to show Proposition \ref{thmintro}, anounced
in the introduction. To do that, we will need several intermediate
results, which will be proved in an elementary way, in the sense that our
proofs will only need the properties of the Schouten bracket
and the definition of the Poisson cohomology,
that are recalled in the first paragraph \ref{Prelim}.

\subsection{Preliminaries}
\label{Prelim}

In this paper, $\F$ is an arbitrary field of characteristic zero.
We recall that a \emph{Poisson
structure} $\PB$ (which is also denoted by $\pi_0$) on an associative
commutative algebra $(\A,\cdot)$ is a skew-symmetric biderivation of
$\A$, i.e., a map $\PB : \we^2\A\to\A$ satisfying the derivation
property:
\begin{equation}\label{leibniz}
\pb{FG,H} =F\pb{G,H} + G\pb{F,H}, \hbox{ for all } F,G,H\in\A,
\end{equation}
(where $FG$ stands for $F\cdot G$), 
which is also a Lie structure on $\A$, i.e., which satisfies the Jacobi identity.

We denote by $\F^\nu$ the ring of all formal power series in an
indeterminate $\nu$ and with coefficients in $\F$, i.e.,
$\F^\nu:=\F[[\nu]]$. We will also consider the $\F^\nu$-vector
space $\A^\nu:=\A[[\nu]]$ of
all formal power series in $\nu$, with coefficients in $\A$. The
associative commutative product ``$\cdot$'', defined on $\A$, is naturally extended to
an associative, commutative product on $\A^\nu$, still denoted by
``$\cdot$''. In the following, any map defined on $\A$ or on $\we^\bullet
\A$ is possibly seen as a map on $\A^\nu$ or $\we^\bullet\A^\nu$ (the
exterior algebra of the $\F^\nu$-vector
space $\A^\nu$), which means that we consider its natural extension by
$\F^\nu$-linearity.
We point out that an $\F^\nu$-$k$-linear map
$\psi:(\A^\nu)^k\to\A^\nu$ can be written as : $\psi=\psi_0 +
\psi_1\nu + \cdots + \psi_n\nu^n + \cdots $, where each $\psi_i$ is a
$k$-linear map $\A^k\to\A$. This permits us to write a natural
isomorphism of $\F^\nu$-vector spaces $\Hom((\A^\nu)^k,\A^\nu) \simeq
\Hom(\A^k,\A)[[\nu]]$.

A \emph{formal deformation} of a Poisson structure $\pi_0$ on $\A$ is a Poisson structure on the
$\F^\nu$-algebra $\A^\nu$, that extends the initial
Poisson structure. In other words, it is given by a map 
$\pi_* : \A^\nu\times\A^\nu \to \A^\nu$ satisfying the Jacobi
identity and of the form :
$$
\pi_* = \pi_0 +\pi_1\nu +\cdots +\pi_n\nu^n +\cdots,
$$
where the $\pi_i$ are skew-symmetric biderivations of $\A$.
If one works modulo $\nu^{n+1}$ (for $n\in\N$), 
i.e., if one replaces the $\F^\nu$-algebra $\A^\nu$ with the $\F^\nu /
\ideal{\nu^{n+1}}$-algebra~$\A^\nu/\ideal{\nu^{n+1}}$ in the previous definition,
one then speaks of \emph{$n$-th order deformation} of~$\pi_0$.

In order to have some tools to study formal (or $n$-th order)
deformations of Poisson structures, we
recall the notion of \emph{Poisson cohomology}. The Poisson complex is defined as
 follows: the space of all Poisson
cochains is $\Vect^\bullet(\A):=\bigoplus_{k\in\N}\Vect^k(\A)$, where
$\Vect^0(\A)$ is $\A$ and, for all $k\in\N^*$, $\Vect^k(\A)$ denotes
the space of all skew-symmetric $k$-derivations of $\A$, i.e., the skew-symmetric
$k$-linear maps $\A^k\to\A$ that satisfy the derivation property
(\ref{leibniz}) in each of their arguments.
Then, the Poisson coboundary operator
$\delta^k_{\pi_0}:\Vect^k(\A)\to\Vect^{k+1}(\A)$ is given by the formula
$$
\delta^k_{\pi_0} := -\Lb{\pi_0}_S,
$$ 
where $\LB_S:\Vect^p(\A)\times\Vect^q(\A)\to\Vect^{p+q-1}(\A)$ is the so-called
Schouten bracket (see~\cite{PLV}). The Schouten bracket is a graded
Lie bracket that generalizes the commutator of derivations and that is
a graded biderivation with respect to the wedge product of multiderivations. It is
defined, for $P\in\Vect^p(\A)$, $Q\in\Vect^q(\A)$ and
$F_1,\dots,F_{p+q-1}\in\A$, by:
\begin{eqnarray}\label{schou_exp}
\lefteqn{\lb{P,Q}_S[F_1,\dots,F_{p+q-1}]}\hspace{0cm}\nonumber\\
    \quad&=&\sum_{\s\in S_{q,p-1}}\epsilon(\s)P\left[Q[F_{\s(1)},\dots,F_{\s(q)}],
                  F_{\s(q+1)},\dots,F_{\s(q+p-1)}\right]\\
    &-&(-1)^{(p-1)(q-1)}\sum_{\s\in S_{p,q-1}}\epsilon(\s)Q\left[P[F_{\s(1)},\dots,F_{\s(p)}],
                  F_{\s(p+1)},\dots,F_{\s(p+q-1)}\right],\nonumber
\end{eqnarray}
where, for $k,\ell\in\N$, $S_{k,\ell}$ denotes the set of all
permutations $\s$ of $\lbrace 1,\dots,k+\ell\rbrace$, satisfying
$\s(1)<\cdots<\s(k)$ and $\s(k+1)<\cdots<\s(k+\ell)$, while
$\epsilon(\s)$ denotes the signature of such a permutation $\s$.
Notice that, similarly to the case of multilinear maps, it is easy to
verify that, for all $k\in\N$, the $\F^\nu$-vector space
$\Vect^k(\A^\nu)$ of all skew-symmetric $k$-derivations of the
associative algebra $(\A^\nu,\cdot)$ is isomorphic to
$\Vect^k(\A)[[\nu]]$. Indeed, every $\psi\in\Vect^k(\A^\nu)$ can be
written as $\psi=\psi_0+\psi_1\nu+\cdots +\psi_n\nu^n +\cdots$, where
each $\psi_i$ is an element of $\Vect^k(\A)$.
In the following, the Schouten bracket will often be considered as a
map, defined on $\Vect^\bullet(\A^\nu)\times\Vect^\bullet(\A^\nu)$,
with the meaning that it is simply extended by
$\F^\nu$-bilinearity and still denoted by $\LB_S$. The map $\LB_S$
then obtained is in fact a graded Lie algebra structure on
$\Vect^\bullet(\A^\nu)$, that could also be defined by a formula
analogous to (\ref{schou_exp}).

It is then easy and useful to see that, given a skew-symmetric biderivation
 $\pi\in~\Vect^2(\A)$, the Jacobi identity for $\pi$ is equivalent to the
 equation $\lb{\pi,\pi}_S=~0$.
Then, because of the graded Jacobi identity satisfied by $\LB_S$ and the
 fact that $\lb{\pi_0,\pi_0}_S=~0$, the operator $\delta_{\pi_0}$ is a coboundary
operator, leading to the Poisson cohomology spaces associated to $(\A,\pi_0)$ and defined by
$ H^k(\A,\pi_0) := \Ker \delta_{\pi_0}^k\big/
\Im\delta_{\pi_0}^{k-1}$, for $k\in\N^*$.
Elements of $Z^k(\A,\pi_0):=\Ker \delta_{\pi_0}^k\subseteq
\Vect^k(\A)$ are the (Poisson) \emph{$k$-cocycles}, while elements of
$B^k(\A,\pi_0):=\Im \delta_{\pi_0}^{k-1}\subseteq \Vect^k(\A)$ are
the (Poisson) \emph{$k$-coboundaries}. 

Moreover, given a map $\pi_* = \pi_0 +\pi_1\nu +\cdots +\pi_n\nu^n
+\cdots : \A^\nu\times\A^\nu\to\A^\nu$ where for all $i\in\N$,
$\pi_i\in\Vect^2(\A)$ is a skew-symmetric biderivation of $\A$, we
have that $\pi_*$ is a formal deformation of $\pi_0$, if and only if,
$\lb{\pi_*,\pi_*}_S=0$, i.e., if and only if, for all $n\in\N$,
\begin{eqnarray}\label{eqn:eq_cohom}
\delta_{\pi_0}^2(\pi_{n+1})  = \frac{1}{2}
  \renewcommand{\arraystretch}{0.7} \sum_{\begin{array}{c}\scriptstyle
  i+j=n+1\\\scriptstyle i,j\geq1\end{array}}\lb{\pi_i,\pi_j}_S.
\end{eqnarray}
Similarly, an $n$-th order deformation $\pi_{(n)}=\pi_0 + \pi_1\nu
+\cdots +\pi_n\nu^n$ will extend to an $(n+1)$-th order deformation
$\pi_{(n+1)}=\pi_{(n)}+\pi_{n+1}\nu^{n+1}$, if and only if,
there exists $\pi_{n+1}\in\Vect^2(\A)$, solution of the previous equation (\ref{eqn:eq_cohom}).

\subsection{Equivalent formal deformations}
\label{sbsec:equiv}

In this paragraph, for an arbitrary Poisson algebra $(\A,\pi_0)$, we
 write a formula, involving only the
Schouten bracket $\LB_S$, for the elements of the equivalence
class of a given formal deformation of $\pi_0$. 

First, we recall the notion of equivalence for deformations of $\pi_0$.
Two formal deformations $\pi_*$ and $\pi'_*$ of $\pi_0$ are said to be
\emph{equivalent} if there exists
an $\F^\nu$-linear map $\Phi:(\A^\nu,\pi_*)\to(\A^\nu,\pi_*')$ that is
a Poisson morphism
and which is such that $\Phi$ is the identity modulo $\nu$. In this case, 
we write $\pi_* \sim\pi'_*$ and we call $\Phi$ an \emph{equivalence morphism from $\pi_*$ to
$\pi_*'$}. In other words,
an $\F^\nu$-linear map $\Phi:\A^\nu\to\A^\nu$ is an equivalence
morphism from $\pi_*$ to $\pi_*'$, if and only if, it is a morphism of
associative algebras, equal to the identity modulo $\nu$ and which satisfies
$$
\Phi(\pi_*[F,G])=\pi'_*[\Phi(F),\Phi(G)],
$$ 
for all $F,G\in\A$ (and therefore, for all $F,G\in\A^\nu$). Notice
that, of course, if $\Phi$ is an equivalence morphism from $\pi_*$ to
$\pi_*'$, then $\Phi^{-1}$ is an equivalence
morphism from $\pi_*'$ to $\pi_*$. 
Similarly, one defines the notion of equivalence for
$n$-th order deformations, by replacing $\F^\nu$ with $\F^\nu/\ideal{\nu^{n+1}}$ and
$\A^\nu$ with $\A^\nu/\ideal{\nu^{n+1}}$ in the previous definition. 

Now, it is straightforward to show that the exponential map gives a bijection
between the space $\Vect^1_0(\A^\nu):=\lbrace \xi=\sum_{k\geq
  1}\xi_k\nu^k \mid \xi_k\in\Vect^1(\A), k\in\N^*\rbrace$ and the
space of all automorphisms of $\A^\nu$ that are equal to the identity
modulo $\nu$. This permits us to write an equivalence morphism $\Phi$ 
between two formal deformations of $\pi_0$ as the image of an element
of $\Vect^1_0(\A^\nu)$, by the exponential map. This implies that
the equivalence classes of formal deformations of $\pi_0$ can be
defined as the equivalence classes of the action, defined as follows, of
$\Vect^1_0(\A^\nu)$ on the formal deformations of~$\pi_0$. 
For a formal deformation $\pi_*$ of $\pi_0$ and for
$\xi\in\Vect^1_0(\A^\nu)$, we define the action $\xi\cdot\pi_*$,
mentionned above, by :
\begin{equation}\label{eqn:action}
\xi\cdot\pi_* [F,G] := e^{\xi}\left(\pi_*\left[e^{-\xi}(F),e^{-\xi}(G)\right]\right),
\end{equation}
for all $F,G\in\A$.
It is then possible to show the following equality :
$$
\xi\cdot\pi_* = e^{\ad_\xi}(\pi_*),
$$
where $\ad_\xi:=\lB{\xi}_S$. This equality involves two notions of
exponential:
\begin{enumerate}
\item[(a)] $e^\xi := \sum\limits_{k\in\N}\frac{1}{k!} \xi^k : \A^\nu \to \A^\nu$,
\item[(b)] $e^{\ad_\xi} := \sum\limits_{k\in\N}\frac{1}{k!}
  (\ad_\xi)^k : \Vect^\bullet(\A^\nu) \to \Vect^\bullet(\A^\nu)$,
\end{enumerate}
for $\xi = \xi_1\nu + \xi_2\nu^2 +\cdots +
\xi_n\nu^n+\cdots\in\Vect^1_0(\A^\nu)$, with $\xi_i\in\Vect^1(\A)$,
for all $i\in\N^*$, and where $\ad_\xi$ is the graded
derivation (of degree $0$) of the associative algebra
$(\Vect^\bullet(\A^\nu),\wedge)$, $\ad_\xi = \lB{\xi}_S$.
In fact, we have 
\begin{eqnarray*}
\lefteqn{e^\xi\left(\pi_*\left[e^{-\xi}(F),e^{-\xi}(G)\right]\right) =}\\ 
\pi_*[F,G] &+& \sum_{k\in\N^*} \renewcommand{\arraystretch}{0.7}
\sum_{\begin{array}{c}\scriptstyle r,s,t\in\N\\ \scriptstyle r+s+t=k\end{array}}
  (-1)^{s+t}\frac{1}{r!}\frac{1}{s!}\frac{1}{t!}
\,\xi^r\left(\pi_*[\xi^s(F),\xi^t(G)]\right) = \\
\pi_*[F,G] &+& \sum_{k\in\N^*} \frac{1}{k!}(\ad_\xi)^k(\pi_*)[F,G] =\\
&& e^{\ad_\xi}(\pi_*)[F,G],
\end{eqnarray*}
where the second equality is easily proved by induction on $k\in\N^*$.
Notice that this action of $\Vect^1_0(\A^\nu)$ can be extended on the
space $\Vect^\bullet(\A^\nu)$ of all skew-symmetric
multiderivations of $\A^\nu$ and then, for any
$Q\in\Vect^\bullet(\A^\nu)$, the formula $\xi\cdot Q = e^{\ad_\xi}(Q)$
still holds. 

This result can be seen as an analog of the well-known
formula that links the adjoint representation $\Ad$ of a Lie group 
${\bf G}$ on its Lie algebra $\mathfrak g$ and the
adjoint action $\ad$ of ${\bf G}$ on $\mathfrak g$: 
$\Ad_{e^\xi} = e^{\ad_\xi}$, for all $\xi\in\mathfrak g$.

Finally, we have obtained the following:
\begin{lma}\label{lma:equiv}
Let $(\A,\pi_0)$ be a Poisson algebra. 
Let $\pi_*$ be a formal deformation of~$\pi_0$. The formal
deformations of $\pi_0$ that are equivalent to $\pi_*$ are precisely the maps $\pi'_*$
of the form
\begin{eqnarray*}
\pi'_* &=& e^{\ad_\xi}(\pi_*) \\
\Bigg(&=& \pi_* + \sum_{k\in\N^*} \frac{1}{k!} 
\underbrace{\lb{\xi,\lb{\xi,\dots\,,\lb{\xi,\pi_*}_S\dots}_S}_S}
_{\hbox{$k$ brackets}}\Bigg)
\end{eqnarray*}
with $\xi\in\Vect^1_0(\A^\nu)$ (i.e., $\xi=\sum_{k\geq 1}\xi_k\nu^k$,
with $\xi_k\in\Vect^1(\A)$, for all $k\in\N^*$).
\end{lma}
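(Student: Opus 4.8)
The plan is to reduce the statement to the two facts already assembled in this paragraph: first, that every equivalence morphism $\Phi$ between formal deformations of $\pi_0$ is of the form $\Phi=e^{\xi}$ for a unique $\xi\in\Vect^1_0(\A^\nu)$ (which follows from the bijectivity of the exponential map between $\Vect^1_0(\A^\nu)$ and the group of automorphisms of $\A^\nu$ equal to the identity modulo $\nu$); and second, the identity $\xi\cdot\pi_* = e^{\ad_\xi}(\pi_*)$, established above by the induction on $k$. Granting these, the argument is essentially a bookkeeping of definitions.

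First I would argue the inclusion ``$\supseteq$'': given any $\xi\in\Vect^1_0(\A^\nu)$, set $\Phi:=e^{\xi}$, which is an automorphism of the associative algebra $\A^\nu$ equal to the identity modulo $\nu$, and define $\pi'_*$ by $\pi'_*[F,G]:=\Phi\big(\pi_*[\Phi^{-1}(F),\Phi^{-1}(G)]\big)$, i.e.\ $\pi'_* = \xi\cdot\pi_*$. By construction $\Phi$ is an equivalence morphism from $\pi_*$ to $\pi'_*$, so $\pi'_*$ is a formal deformation of $\pi_0$ equivalent to $\pi_*$; and by the displayed computation $\pi'_* = e^{\ad_\xi}(\pi_*)$, which is exactly the asserted form (the parenthetical expansion being just the power-series definition of $e^{\ad_\xi}$ applied to $\pi_*$, using $\ad_\xi=\lB{\xi}_S$). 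One small point worth a line: since $\xi$ has no $\nu^0$-term, $\ad_\xi$ raises the $\nu$-order by at least one, so the infinite sum $\sum_k \frac{1}{k!}(\ad_\xi)^k(\pi_*)$ converges $\nu$-adically and $\pi'_*$ is a well-defined element of $\Vect^2(\A^\nu)$ with $\pi'_* \equiv \pi_0 \pmod \nu$.

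Conversely, for ``$\subseteq$'', let $\pi'_*$ be any formal deformation of $\pi_0$ equivalent to $\pi_*$. By definition of equivalence there is an $\F^\nu$-linear Poisson morphism $\Phi:(\A^\nu,\pi_*)\to(\A^\nu,\pi'_*)$ which is an associative-algebra morphism equal to the identity modulo $\nu$; such a $\Phi$ is automatically invertible and is therefore an automorphism of $\A^\nu$ equal to $\Id$ modulo $\nu$. By the bijectivity of the exponential map there is a unique $\xi\in\Vect^1_0(\A^\nu)$ with $\Phi=e^{\xi}$. The Poisson-morphism condition $\Phi(\pi_*[F,G])=\pi'_*[\Phi(F),\Phi(G)]$, evaluated at $\Phi^{-1}(F),\Phi^{-1}(G)$ in place of $F,G$, says precisely $\pi'_* = \xi\cdot\pi_*$, hence $\pi'_* = e^{\ad_\xi}(\pi_*)$ by the identity recalled above. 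This exhibits $\pi'_*$ in the claimed form and completes the proof.

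There is essentially no serious obstacle here: all the analytic content (convergence of the exponentials, the combinatorial identity yielding $e^\xi(\pi_*[e^{-\xi}(\cdot),e^{-\xi}(\cdot)])=e^{\ad_\xi}(\pi_*)$, and the bijection $\exp:\Vect^1_0(\A^\nu)\to\{\text{automorphisms}\equiv\Id\bmod\nu\}$) has already been done in the preceding discussion, so the lemma is just a clean restatement. If I had to name a delicate point it would be making sure the quantifier ``for all $F,G\in\A$'' in the defining property of an equivalence morphism really does force equality of the $\F^\nu$-bilinear maps $\pi'_*$ and $\xi\cdot\pi_*$ on all of $\A^\nu\times\A^\nu$; but this is immediate from $\F^\nu$-bilinearity and continuity, and is in any case already noted in the text. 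One should also remark, for completeness, that the correspondence $\pi'_*\leftrightarrow\xi$ need not be injective, since different $\xi$ may fix $\pi_*$; the lemma only claims that the set of $\pi'_*$ of the given form coincides with the equivalence class of $\pi_*$, which is all that is needed.
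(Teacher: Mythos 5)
Your proof is correct and follows exactly the route the paper takes: the lemma is presented there as the culmination of the preceding discussion, which establishes precisely the two ingredients you invoke (the bijection $\exp:\Vect^1_0(\A^\nu)\to\{\text{automorphisms}\equiv\Id\bmod\nu\}$ and the identity $\xi\cdot\pi_*=e^{\ad_\xi}(\pi_*)$ via the displayed induction on $k$). Your write-up merely makes explicit the two inclusions that the paper leaves implicit, so there is nothing to add.
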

Notice that there is an analogous result if one considers rather the
formal deformations of an associative commutative or a Lie product,
but then, the $\xi_k$ do not have to be derivations of $\A$ and the
Schouten bracket has to be replaced by the corresponding graded Lie
algebra structure on the cochains of the Hochschild (Gerstenhaber
bracket) or Chevalley-Eilenberg cohomology (Nijenhuis-Richardson bracket).

\subsection{Deformations of Poisson structures in a good case}
%
%
%

In this paragraph, we prove a proposition which gives, for a
certain class of Poisson structures, all formal deformations up to
equivalence. The hypotheses involved in this proposition are strong,
but we will be able to apply this result to big
families of Poisson algebras that we will consider in
Section \ref{sec:dim3} of this paper. 
\begin{prp}
 \label{magic}
Let $(\A,\pi_0)$ be a Poisson algebra. 
Suppose that $(\vartheta_k\in\Vect^2(\A))_{k\in \K}$ is a set of
$2$-cocycles, whose cohomology classes form an $\F$-basis of
$H^2(\A,\pi_0)$ and define $\S$, the set of all 
$\aa=(a_n^k\in\F)_{{k\in\K}\atop{n\in\N^*}}$, such that, for every
  $n_0\in\N^*$, the sequence $(a_{n_0}^k)_{k\in\K}$ has a
  finite support.

Suppose that we have a family $(\pi_*^\aa)_{\aa\in\S}$ of formal
deformations of the Poisson structure $\pi_0$, 
indexed by the elements $\aa=(a_n^k)_{{k\in\K}\atop{n\in\N^*}}$
of $\S$, and of the form:
\begin{equation}
 \label{defo_family}
 \pi_*^\aa = \pi_0 + \sum_{n\in\N^*} \left(\Psi_n^\aa 
+ \sum_{k\in\K} a^k_n\;\vartheta_k \right)\nu^n,
\end{equation}
where, for all $n\in\N^*$, $\Psi_n^\aa\in\Vect^2(\A)$ is a 
skew-symmetric biderivation of $\A$, depending only on the 
$a_m^k$, where $k\in\K$ and $1\leq m < n$; and
$\Psi_1^\aa=0$. Then, we have the following:
\begin{enumerate}
\item[(a)] For any formal deformation $\pi_*$ of $\pi_0$, there
  exists an element 
$\aa=(a_n^k)_{{k\in\K}\atop{n\in\N^*}}$ of $\S$, such that $\pi_*$ is
equivalent to $\pi_*^\aa$;
\item[(b)] For any $m$-th order deformation $\pi_{(m)}$ of $\pi_0$
($m\in\N^*$), there exists an element
$\aa=(a_n^k)_{{k\in\K}\atop{n\in\N^*}}$ of $\S$, such that $\pi_{(m)}$ is
equivalent to $\pi_*^\aa$ modulo $\nu^{m+1}$, i.e., in $\A^{\nu}/\ideal{\nu^{m+1}}$.
\end{enumerate}
\end{prp}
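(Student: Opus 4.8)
The plan is to prove (a) and (b) by induction, constructing the element $\aa\in\S$ and the equivalence morphism $\Phi=e^\xi$ order by order in $\nu$. Let me describe the induction for (a); part (b) is the same argument truncated modulo $\nu^{m+1}$.

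\textbf{Setup of the induction.} Suppose $\pi_*=\pi_0+\sum_{n\geq1}\pi_n\nu^n$ is an arbitrary formal deformation of $\pi_0$. I will construct, inductively on $N\in\N^*$, coefficients $(a_n^k)_{k\in\K}$ for $n\leq N$ (with finite support in $k$ for each $n$) and a vector field $\xi^{(N)}=\sum_{n=1}^{N-1}\xi_n\nu^n\in\Vect^1_0(\A^\nu)$, such that
$$e^{\ad_{\xi^{(N)}}}(\pi_*)\equiv \pi_*^\aa \pmod{\nu^{N+1}},$$
where $\aa$ uses the coefficients chosen so far (and anything for $n>N$, since by hypothesis $\Psi_n^\aa$ depends only on $a_m^k$ with $m<n$, so $\pi_*^\aa$ mod $\nu^{N+1}$ is already determined). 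Then $\xi:=\lim \xi^{(N)}$ (which converges $\nu$-adically since we only add higher-order terms at each stage) gives the desired equivalence by Lemma~\ref{lma:equiv}.

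\textbf{The inductive step.} Assume the congruence holds mod $\nu^{N}$, so after applying $e^{\ad_{\xi^{(N-1)}}}$ we may assume outright that $\pi_n=\Psi_n^\aa+\sum_k a_n^k\vartheta_k$ for all $n<N$, and we must fix up the coefficient of $\nu^N$. Write $\pi_N$ (the $N$-th coefficient of the already-adjusted deformation) and compare with $\Psi_N^\aa+\sum_k a_N^k\vartheta_k$. Both $\pi_*$ and $\pi_*^\aa$ are formal deformations agreeing mod $\nu^N$, so subtracting the two instances of equation~(\ref{eqn:eq_cohom}) at order $N$ (whose right-hand sides depend only on the $\pi_i$ with $i<N$, hence coincide) gives
$$\delta_{\pi_0}^2\bigl(\pi_N-\Psi_N^\aa\bigr)=0,$$
i.e.\ $\pi_N-\Psi_N^\aa$ is a $2$-cocycle. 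Since the classes of the $\vartheta_k$ form a basis of $H^2(\A,\pi_0)$, there is a unique finitely-supported family $(a_N^k)_{k\in\K}$ with $\pi_N-\Psi_N^\aa-\sum_k a_N^k\vartheta_k\in B^2(\A,\pi_0)$, say equal to $\delta_{\pi_0}^1(\eta_N)=-[\pi_0,\eta_N]_S$ for some $\eta_N\in\Vect^1(\A)$. Now set $\xi_N:=-\eta_N$ and compute the effect of conjugating by $e^{\ad_{\xi_N\nu^N}}$: from the expansion in Lemma~\ref{lma:equiv},
$$e^{\ad_{\xi_N\nu^N}}(\pi_*')=\pi_*'+[\xi_N,\pi_0]_S\,\nu^N+(\text{terms of order}>N),$$
since every further bracket or the bracket with $\pi_i$ ($i\geq1$) raises the $\nu$-order beyond $N$. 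Thus the $N$-th coefficient becomes $\pi_N+[\xi_N,\pi_0]_S=\pi_N-\delta_{\pi_0}^1(\eta_N)=\Psi_N^\aa+\sum_k a_N^k\vartheta_k$, as required, while the coefficients of $\nu^n$ for $n<N$ are unchanged. This completes the inductive step, and setting $\xi^{(N+1)}=\xi^{(N)}\circ(\text{composition with }e^{\xi_N\nu^N})$ — more precisely, replacing the single exponential by the composite automorphism, or using the Baker–Campbell–Hausdorff combination of the $\xi_n\nu^n$ to get one $\xi\in\Vect^1_0(\A^\nu)$ — finishes (a). For (b) one runs the same induction for $N=1,\dots,m$ only.

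\textbf{Main obstacle.} The substantive point — and the reason the hypotheses are arranged as they are — is the observation that the right-hand side of~(\ref{eqn:eq_cohom}) at order $N$ involves only lower-order $\pi_i$, so that once $\pi_*$ and $\pi_*^\aa$ are forced to agree below order $N$, the difference $\pi_N-\Psi_N^\aa$ is automatically $\delta_{\pi_0}^2$-closed; this is what lets us absorb it into a combination of the $\vartheta_k$ plus a coboundary, and it is essential that $\Psi_N^\aa$ depends only on the already-determined $a_m^k$ ($m<N$) so that no circularity arises. A secondary bookkeeping issue is the passage from the sequence of elementary conjugations $e^{\ad_{\xi_N\nu^N}}$ to a single $\xi\in\Vect^1_0(\A^\nu)$ with $e^{\ad_\xi}(\pi_*)=\pi_*^\aa$: this is routine $\nu$-adic convergence together with the group structure of automorphisms trivial mod $\nu$ (each $\xi_N$ enters at order $N$, so the infinite composite is well defined and is itself $e^\xi$ for a unique $\xi$, by the bijectivity of the exponential recalled before Lemma~\ref{lma:equiv}). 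I do not expect the uniqueness statement of the introduction's Proposition (part (b) there, requiring $H^1=0$) to be needed here; in Proposition~\ref{magic} only existence is claimed, so the possible non-uniqueness of $\eta_N$ (ambiguity by $Z^1(\A,\pi_0)$) is harmless.
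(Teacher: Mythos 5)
Your proposal is correct and follows essentially the same route as the paper's proof: an induction on the order $N$, using the cohomological equation (\ref{eqn:eq_cohom}) to see that once the deformations agree below order $N$ the discrepancy at order $N$ is a $2$-cocycle, then splitting it into a combination of the $\vartheta_k$ plus a coboundary absorbed by a gauge term $\xi_N\nu^N$. The only (immaterial) differences are that you conjugate $\pi_*$ toward $\pi_*^\aa$ rather than the reverse, and you assemble the gauge transformation as a composite of elementary exponentials (justified via the exponential bijection before Lemma \ref{lma:equiv}) where the paper carries a single cumulative $\xi_{(N)}$ and verifies the order-$(N+1)$ coefficient by direct expansion.
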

\begin{proof}
Let $\pi_* =\pi_0+ \sum\limits_{k\in\N^*} \pi_k\nu^k$ be an arbitrary
formal deformation of $\pi_0$. According to Lemma \ref{lma:equiv}, the
existence of an element $\aa=(a_n^k)_{{k\in\K}\atop{n\in\N^*}}\in\S$,
such that $\pi_* \sim \pi_*^\aa$, is equivalent to the existence of an
element $\xi=\sum_{k\in\N^*} \xi_k\nu^k\in\Vect^1_0(\A^\nu)$ such that 
$$
\pi_*=e^{\ad_\xi}(\pi_*^\aa).
$$
In order to simplify the notation, for every $\aa\in\S$ and every
$\xi\in\Vect^1_0(\A^\nu)$, we write $\pi_*^{\aa,\xi} : =
e^{\ad_\xi}(\pi_*^\aa)$ and $\pi_*^{\aa,\xi} = \pi_0 + \sum_{i\in\N^*}
\pi_i^{\aa,\xi}\,\nu^i$, $\pi^\aa_*=\pi_0 +
\sum_{i\in\N^*}\pi_i^\aa\,\nu^i$, 
with $\pi_i^{\aa,\xi},\pi_i^\aa\in\Vect^2(\A)$, for every $i\in\N^*$.

We will then show that, for every $N\in\N^*$, there exist $a^k_1,a^k_2,\dots,a^k_N\in\F$,
for $k\in\K$ (such that, for every $1\leq i\leq N$, only a finite
number of $a^k_i$ are non-zero) and $\xi_1,\dots,\xi_N\in\Vect^1(\A)$ such
that 
\begin{equation}
\pi_* = \pi_*^{\aa_{(N)},\xi_{(N)}}= e^{\ad_{\xi_{(N)}}}(\pi_*^{\aa_{(N)}})   \; \mod \nu^{N+1},
\end{equation}
where $\aa_{(N)} := (a^k_1,a^k_2,\dots,a^k_N,0,0,\dots)_{k\in\K}
= (b^k_n)_{{k\in\K}\atop{n\in\N^*}}\in\S$ with $b^k_n = a^k_n$, for $1\leq
n\leq N$ and $b^k_n =0$
as soon as $n> N$ and $\xi_{(N)}:=\xi_1\nu +\cdots +
\xi_N\nu^N\in\Vect^1_0(\A^\nu)$. We will do that by induction on
$N\in\N^*$. 
Notice that, in order to prove the second point of the proposition,
with $m$-th order deformations of $\pi_0$, we just have to use
the same proof, but only for $1\leq N\leq m$.

First of all, suppose that $N=1$. We know, according to
(\ref{eqn:eq_cohom}), that $\delta^2_{\pi_0}(\pi_1)=0$, so that, by
definition of the $\vartheta_k$, there exist
$a^k_1\in\F$, for all $k\in\K$ (with only a finite number of non-zero $a^k_1$), and $\xi_1\in\Vect^1(\A)$ such that:
$$
\pi_1 = \sum_{k\in\K} a^k_1\vartheta_k - \delta^1_{\pi_0}(\xi_1).
$$ 
Denoting by 
$\aa_{(1)}:=(a^k_1,0,0,\dots)_{k\in\K}\in\S$, and
$\xi_{(1)}:=\xi_1\nu\in\Vect^1_0(\A^\nu)$, we have:
$$
\pi_*^{\aa_{(1)}} = \pi_0 + \sum_{k\in\K}a^k_1\vartheta_k \nu \;\mod\nu^2,
$$
hence the following
equalities in $\A^\nu/\ideal{\nu^2}$:
\begin{eqnarray*}
\pi_*^{\aa_{(1)},\xi_{(1)}} 
&=& e^{\ad_{\xi_{(1)}}}(\pi_*^{\aa_{(1)}}) \;\mod\nu^2\\
&=& \pi_0 + \sum_{k\in\K}a^k_1\vartheta_k \nu
      +\lb{\xi_1,\pi_0}_S\nu \;\mod\nu^2 \\
&=& \pi_0 + \pi_1\nu\;\mod\nu^2,
\end{eqnarray*}
which achieves the case $N=1$. Suppose now $N\geq 1$ and assume the
existence of elements $a^k_n \in\F$,
for $k\in\K$ and $1\leq n\leq N$ (with, for every $1 \leq n_0\leq
N$, only a finite number of non-zero $a^k_{n_0}$) and the
existence of $\xi_1,\dots,\xi_N\in\Vect^1(\A)$, satisfying: 
\begin{equation}
\pi_* = \pi_*^{\aa_{(N)},\xi_{(N)}}\; \mod \nu^{N+1},
\end{equation}
where $\aa_{(N)} :=(a^k_1,a^k_2,\dots,a^k_N,0,0\dots)_{k\in\K}\in\S$
and $\xi_{(N)}:=\xi_1\nu +\cdots + \xi_N\nu^N\in\Vect^1_0(\A^\nu)$.
We want to show that this equality can be extended to the rank
$N+1$, with some $a^k_{N+1}\in\F$, $k\in\K$ and $\xi_{N+1}\in\Vect^1(\A)$.
As, by induction hypothesis, we have $\pi_i =
\pi_i^{\aa_{(N)},\xi_{(N)}}$, for all $1\leq i \leq N$,
Equation (\ref{eqn:eq_cohom}) implies 
$$
\delta^2_{\pi_0}\left(\pi_{N+1}\right) =
\delta^2_{\pi_0}\left(\pi_{N+1}^{\aa_{(N)},\xi_{(N)}}\right),
$$
so that there exist $a^k_{N+1}\in\F$, for $k\in\K$ (among which only a
finite number are non-zero) and
$\xi_{N+1}\in\Vect^1(\A)$, such that 
\begin{equation}\label{eqn:almost_defo}
\pi_{N+1} = \pi_{N+1}^{\aa_{(N)},\xi_{(N)}} +
\sum_{k\in\K} a^k_{N+1} \vartheta_k - \delta^1_{\pi_0}\left(\xi_{N+1}\right).
\end{equation}
Similarly to previously, let us denote by
$\aa_{(N+1)} := (a^k_1,a^k_2,\dots,a^k_{N+1},0,0\dots)\in\S$
and $\xi_{(N+1)}:=\xi_1\nu +\cdots +
\xi_{N+1}\nu^{N+1}\in\Vect^1_0(\A^\nu)$.
By definition of the $\Psi_n^\bb$, for
$\bb\in\S$ and of the elements $\aa_{(N+1)}$ and
$\aa_{(N)}$, the skew-symmetric biderivation
$\Psi_{N+1}^{\aa_{(N+1)}}$ depends only on the $a_m^k$, with $k\in\K$
and $1\leq m <N+1$, i.e., only on $\aa_{(N)}$ and
$\Psi_{N+1}^{\aa_{(N+1)}}=\Psi_{N+1}^{\aa_{(N)}}$. By definition of
the formal deformations of the form $\pi_*^\bb$, we then have:
\begin{equation*}
\pi_{N+1}^{\aa_{(N+1)}} 
= \Psi_{N+1}^{\aa_{(N)}} + \sum_{k\in\K} a^k_{N+1}\;\vartheta_k
= \pi_{N+1}^{\aa_{(N)}} + \sum_{k\in\K} a^k_{N+1}\;\vartheta_k.
\end{equation*}
Then, using 
the fact that $\pi_\ell^{\aa_{(N+1)}} = \pi_\ell^{\aa_{(N)}}$, for all
$\ell<N+1$, we also have:
\begin{eqnarray*}
\lefteqn{\pi_{N+1}^{\aa_{(N+1)},\xi_{(N+1)}} = \pi_{N+1}^{\aa_{(N+1)}}} \\
           &+& \sum_{r\in\N^*} \frac{1}{r!}
   \sum_{\ell= 0}^N\! \renewcommand{\arraystretch}{0.7}
\sum\limits_{\begin{array}{c}\scriptstyle i_1+\cdots+i_r+\ell=N+1\\
 \scriptstyle 1\leq i_1,\dots,i_r\leq N+1\end{array}}\!
   \lb{\xi_{i_1},\lb{\xi_{i_2},\dots\,,
       \lb{\xi_{i_r},\pi_\ell^{\aa_{(N+1)}}}_S\dots}_S}_S\\
 &=& \pi_{N+1}^{\aa_{(N+1)}} + \lb{\xi_{N+1},\pi_0}_S\\
       &+& \sum_{r\in\N^*} \frac{1}{r!}
   \sum_{\ell= 0}^N\! \renewcommand{\arraystretch}{0.7}
\sum\limits_{\begin{array}{c}\scriptstyle i_1+\cdots+i_r+\ell=N+1\\
 \scriptstyle 1\leq i_1,\dots,i_r\leq N\end{array}}\!
 \lb{\xi_{i_1},\lb{\xi_{i_2},\dots\,,\lb{\xi_{i_r},\pi_\ell^{\aa_{(N+1)}}}_S\dots}_S}_S\\
 &=& \pi_{N+1}^{\aa_{(N)}} + \sum_{k\in\K} a^k_{N+1}\;\vartheta_k 
   + \lb{\xi_{N+1},\pi_0}_S \\
   &+& \sum_{r\in\N^*} \frac{1}{r!}
   \sum_{\ell= 0}^N\; \renewcommand{\arraystretch}{0.7}
\sum\limits_{\begin{array}{c}\scriptstyle i_1+\cdots+i_r+\ell=N+1\\
 \scriptstyle 1\leq i_1,\dots,i_r\leq N\end{array}}
   \lb{\xi_{i_1},\lb{\xi_{i_2},\dots\,,\lb{\xi_{i_r},\pi_\ell^{\aa_{(N)}}}_S\dots}_S}_S\\
 &=& \pi_{N+1}^{\aa_{(N)},\xi_{(N)}} 
   + \sum_{k\in\K} a^k_{N+1}\;\vartheta_k + \lb{\xi_{N+1},\pi_0}_S\\
 &=& \pi_{N+1},
\end{eqnarray*}
where, in last step, we used Equation (\ref{eqn:almost_defo}). This
achieves the proof.
\end{proof}

\subsection{The case of $H^1(\A,\pi_0)=\lbrace 0\rbrace$}
\label{subsec:roleh1}

In this paragraph, we study equivalent formal deformations of a 
Poisson structure, under the assumption that the
first cohomology space $H^1(\A,\pi_0)$ is zero. We will in fact study
in Section \ref{sec:dim3} of this paper, a family of Poisson structures, for
which this space is generically zero. We use the result given in this paragraph.
Before giving this result, we need the following
\begin{lma}\label{lma:H1H1formal}
Let $(\A,\pi_0)$ be a Poisson algebra and let $\pi_*$ be a formal
deformation of $\pi_0$. Suppose that the first Poisson cohomology space, 
associated to the initial Poisson algebra, is zero: 
$$
H^1(\A,\pi_0) = \lbrace 0\rbrace.
$$
Then, we have the following:
\begin{enumerate}
\item[(a)] The first Poisson cohomology space, associated to the Poisson
algebra $(\A^\nu,\pi_*)$, is zero:
$$
H^1(\A^\nu,\pi_*)=\lbrace 0\rbrace;
$$
\item[(b)] For all $N\in\N^*$, the first Poisson cohomology space, 
associated to the Poisson algebra 
$\left(\A^\nu/\ideal{\nu^N},\pi_* \,\mod\nu^N\right)$, is zero:
$$
H^1\left(\A^\nu/\ideal{\nu^N},\pi_* \,\mod\nu^N\right) = \lbrace 0\rbrace. 
$$
\end{enumerate}

\end{lma}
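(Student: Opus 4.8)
The plan is to prove (a) first and then deduce (b) by essentially the same argument run modulo $\nu^N$. The heart of the matter is the following: an element of $Z^1(\A^\nu,\pi_*)$ is a derivation $\xi = \xi_0 + \xi_1\nu + \xi_2\nu^2 + \cdots$ of $\A^\nu$ (so each $\xi_i \in \Vect^1(\A)$, using the isomorphism $\Vect^1(\A^\nu)\simeq\Vect^1(\A)[[\nu]]$ recalled in Paragraph \ref{Prelim}) with $\lb{\xi,\pi_*}_S = 0$, and I want to produce $F = F_0 + F_1\nu + \cdots \in \A^\nu$ with $\xi = \delta^0_{\pi_*}(F) = -\lb{\pi_*,F}_S$, i.e.\ $\xi = \lb{F,\pi_*}_S$. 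I will build $F$ degree by degree in $\nu$. Expanding $\lb{\xi,\pi_*}_S = 0$ in powers of $\nu$ gives, for each $n\ge 0$, the relation $\sum_{i+j=n}\lb{\xi_i,\pi_j}_S = 0$; the $n=0$ term is $\lb{\xi_0,\pi_0}_S = 0$, so $\xi_0 \in Z^1(\A,\pi_0)$, hence $\xi_0$ is a $1$-coboundary by the hypothesis $H^1(\A,\pi_0) = \{0\}$: there is $F_0\in\A$ with $\xi_0 = \delta^1_{\pi_0}(F_0) = \lb{F_0,\pi_0}_S$.

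The inductive step is where the work is. Suppose I have found $F_0,\dots,F_{n-1}\in\A$ such that $\xi$ and $\delta^0_{\pi_*}(F_0 + \cdots + F_{n-1}\nu^{n-1})$ agree modulo $\nu^n$. Set $\eta := \xi - \lb{F_0+\cdots+F_{n-1}\nu^{n-1},\pi_*}_S$; this is still a $1$-cocycle of $(\A^\nu,\pi_*)$ (the coboundaries $B^1$ sit inside the cocycles $Z^1$, since $\delta_{\pi_*}\circ\delta_{\pi_*} = 0$), and by construction $\eta = \eta_n\nu^n + \eta_{n+1}\nu^{n+1}+\cdots$ with leading term $\eta_n\in\Vect^1(\A)$. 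Writing out the degree-$n$ part of $\lb{\eta,\pi_*}_S = 0$ and using $\eta_0 = \cdots = \eta_{n-1} = 0$, every term involving $\pi_j$ with $j\ge 1$ is multiplied by some $\eta_i$ with $i < n$ and hence vanishes, leaving only $\lb{\eta_n,\pi_0}_S = 0$. So $\eta_n \in Z^1(\A,\pi_0) = B^1(\A,\pi_0)$, and there is $F_n\in\A$ with $\eta_n = \lb{F_n,\pi_0}_S$. Then $\eta - \lb{F_n\nu^n,\pi_\ast}_S$ has leading term $\eta_n\nu^n - \lb{F_n,\pi_0}_S\nu^n = 0$, so $\xi$ agrees with $\delta^0_{\pi_*}(F_0+\cdots+F_n\nu^n)$ modulo $\nu^{n+1}$, completing the induction. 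The resulting $F = \sum_n F_n\nu^n \in \A^\nu$ satisfies $\xi = \delta^0_{\pi_*}(F)$, proving $Z^1(\A^\nu,\pi_*) = B^1(\A^\nu,\pi_*)$, i.e.\ (a). The only subtlety to check carefully is the bookkeeping that isolates $\lb{\eta_n,\pi_0}_S$ as the sole surviving term in degree $n$ — this is the routine but essential computation, and it is exactly the same mechanism as in Equation (\ref{eqn:eq_cohom}) but one degree lower.

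For (b), I would simply observe that the entire construction above is compatible with reduction modulo $\nu^N$: if $\bar\xi\in Z^1(\A^\nu/\ideal{\nu^N},\pi_*\mod\nu^N)$, lift its components $\xi_0,\dots,\xi_{N-1}$ and run the same inductive procedure for $n = 0,1,\dots,N-1$, producing $F_0,\dots,F_{N-1}$; the cocycle condition modulo $\nu^N$ gives exactly the relations $\sum_{i+j=n}\lb{\xi_i,\pi_j}_S = 0$ for $n \le N-1$, which is all that is needed at each step. Thus $\bar\xi = \delta^0_{\pi_*}(\bar F)$ in $\A^\nu/\ideal{\nu^N}$, giving (b). I expect no genuine obstacle here beyond keeping the truncation indices straight; the one-line alternative is to note that (b) also follows from (a) together with the fact that reduction $\A^\nu \to \A^\nu/\ideal{\nu^N}$ is surjective and takes $\pi_*$-cocycles/coboundaries onto those of the quotient, but the direct induction is cleaner and self-contained.
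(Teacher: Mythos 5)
Your proposal is correct and follows essentially the same route as the paper: an induction on the order in $\nu$, at each step subtracting off the coboundary built so far, observing that the degree-$n$ part of the cocycle equation collapses to $\lb{\eta_n,\pi_0}_S=0$ because the lower coefficients vanish, and invoking $H^1(\A,\pi_0)=\{0\}$ to find the next term $F_n$; part (b) is likewise handled by truncating the same induction at order $N$. (The only blemishes are harmless notational slips: $\delta^1_{\pi_0}(F_0)$ should read $\delta^0_{\pi_0}(F_0)$, and the sign conventions relating $\delta^0_{\pi_*}$ to the Schouten bracket can be absorbed into the choice of $F$.)
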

\begin{proof}
Let $(\A,\pi_0)$ be a Poisson algebra such that $H^1(\A,\pi_0)=\lbrace
0\rbrace$ and let $\pi_*=\pi_0 + \sum_{i\in\N^*}\pi_i\nu^i$ be a formal deformation of $\pi_0$.
Suppose that $\psi\in\Vect^1(\A^\nu)$ 
is a $1$-cocycle for the Poisson algebra $(\A^\nu,\pi_*)$. It means that we have 
\begin{equation}\label{eq:formal1cocycle}
\lb{\psi,\pi_*}_S = 0.
\end{equation}
We write $\psi=\sum_{i\in\N}\psi_i\nu^i$, with
$\psi_i\in\Vect^1(\A)$ for all $i\in\N$.
Now, in order to prove the first part of the lemma, we will show that
for all $m\in\N^*$, there exist 
$h_0,h_1,\dots,h_{m-1}\in\A$, satisfying 
\begin{equation}\label{eq:formal1coboundaryk}
\psi + \lb{h_0+h_1\nu+\cdots+h_{m-1}\nu^{m-1},\pi_*}_S = 0 \,\mod\nu^m. 
\end{equation}
Indeed, denoting by $H\in\A^\nu$ the element
$H=\sum_{i\in\N}h_i\nu^i$, 
this shows that $\psi = -\lb{H,\pi_*}_S = \delta^1_{\pi_*}(H)$ 
is a $1$-coboundary for the Poisson algebra $(\A^\nu,\pi_*)$ and it
permits us to conclude that $H^1(\A^\nu,\pi_*)=\lbrace 0\rbrace$.
Notice that in order to prove the second part of the lemma, it
suffices to do exactly the same proof but only for $1\leq m< N$.

By induction, we will show the equality (\ref{eq:formal1coboundaryk}), for all
$m\in\N^*$. First of all, let us consider
the case $m=1$. In fact, (\ref{eq:formal1cocycle}) implies in
particular that $0 = \lb{\psi,\pi_*}_S \,\mod\nu = \lb{\psi_0,\pi_0}_S =
-\delta^1_{\pi_0}(\psi_0)$. As $H^1(\A,\pi_0)=\lbrace 0\rbrace$, we then
obtain the existence of an element $h_0\in\A$, such that $\psi_0 =
\delta^0_{\pi_0}(h_0)=-\lb{h_0,\pi_0}_S$, 
which is exactly (\ref{eq:formal1coboundaryk}), for $m=1$.

Now, suppose $m\geq 1$ and that we have $h_0,h_1,\dots,h_{m-1}\in\A$
such that $\Psi := \psi +
\lb{h_0+h_1\nu+\cdots+h_{m-1}\nu^{m-1},\pi_*}_S \in\Vect^1(\A^\nu)$ satisfies
$\Psi= 0 \,\mod\nu^m$. We then write $\Psi = \sum_{i\geq
  m}\Psi_i\nu^i$, with $\Psi_i\in\Vect^1(\A)$ for all $i\geq m$.
As $\Psi$ and $\psi$ differ from a $1$-coboundary of the Poisson
algebra $(\A^\nu,\pi_*)$, Equality (\ref{eq:formal1cocycle}) together
with the fact that $\Psi= 0 \,\mod\nu^m$ imply that 
\begin{equation}
0 = \lb{\psi,\pi_*}_S\,\mod\nu^{m+1} 
 = \lb{\Psi,\pi_*}_S \,\mod\nu^{m+1}
 = \lb{\Psi_m,\pi_0}_S \,\nu^m.
\end{equation}
We then have obtained that $\delta^1_{\pi_0}(\Psi_m) = -\lb{\Psi_m,\pi_0}_S =
0$ and, as $H^1(\A,\pi_0)=\lbrace 0\rbrace$, we have the existence of
an element $h_m\in\A$, such that $\Psi_m = \delta^0_{\pi_0}(h_m)$. This
can be written as follows :
$$
 -\lb{h_m,\pi_0}_S =  \Psi_m = \psi_m 
+ \renewcommand{\arraystretch}{0.7}
\sum\limits_{\begin{array}{c}\scriptstyle i+j=m\\\scriptstyle
    0\leq i\leq m-1\\\scriptstyle j\in\N\end{array}} \lb{h_i,\pi_j}_S,
$$
which is exactly $\psi_m = -\renewcommand{\arraystretch}{0.7}
\sum\limits_{\begin{array}{c}\scriptstyle i+j=m\\\scriptstyle
     i,j\in\N\end{array}} \lb{h_i,\pi_j}_S$. Using this and
(\ref{eq:formal1coboundaryk}), we obtain 
$$
\psi + \lb{h_0+h_1\nu+\cdots+h_{m-1}\nu^{m-1}+h_m\nu^m,\pi_*}_S = 0
\,\mod\nu^{m+1},
$$
which we wanted to show.
\end{proof}
\begin{rem}
We point out that Lemma \ref{lma:H1H1formal} is also valid if the first
Poisson cohomology spaces associated to $(A,\pi_0)$,
$(\A^\nu,\pi_*)$ or $\left(\A^\nu/\ideal{\nu^N},\pi_*
\,\mod\nu^N\right)$ are replaced by the $k$-th Poisson cohomology spaces
associated to these Poisson algebras. The proof is clearly analogous.
In fact, in the present paper, we will only need the
result as stated above. The
generic Poisson algebras which we will consider in dimension three, in
Section \ref{sec:dim3}, will have indeed a first Poisson cohomology space
which is zero and non-zero $k$-th Poisson cohomology spaces, for
$k\in\lbrace 0, 2, 3\rbrace$.
\end{rem}
Before the main result of this paragraph, let us give another lemma.
\begin{lma}\label{lma:xiformalcocycle}
Let $(\A,\pi_0)$ be a Poisson algebra.
Let us suppose that $\pi_* \sim \pi'_*$ are two equivalent formal deformations of
$\pi_0$. According to Lemma \ref{lma:equiv}, there exists an element
$\xi\in\Vect^1_0(\A^\nu)$ such that $\pi'_* = e^{\ad_\xi}(\pi_*)$.
If 
$$
\pi_* = \pi'_* \mod \nu^{N} \quad\hbox{ for some } N\in\N^*,
$$ 
then $\xi\,\mod\nu^N$ is a $1$-cocycle of
the Poisson algebra $\left(\A^\nu/\ideal{\nu^N},\pi_*
\,\mod\nu^N\right)$, i.e.,
$$
\lb{\xi,\pi_*}_S\,\mod\nu^N=0.
$$
\end{lma}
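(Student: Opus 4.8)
The plan is to expand the equality $\pi'_* = e^{\ad_\xi}(\pi_*)$ in powers of $\nu$ and compare the two sides modulo $\nu^N$, using the hypothesis that the deformations already agree to that order. Write $\xi = \xi_1\nu + \xi_2\nu^2 + \cdots \in\Vect^1_0(\A^\nu)$, so that every term $(\ad_\xi)^k(\pi_*)$ with $k\geq 2$ is divisible by $\nu^{k}$, i.e. contributes nothing to the coefficients of $\nu^0,\dots,\nu^{N-1}$ that are not already captured by the linear term when $k < N$; more precisely, modulo $\nu^{N}$ one has
\begin{equation*}
\pi'_* = e^{\ad_\xi}(\pi_*) = \pi_* + \sum_{k\in\N^*}\frac{1}{k!}(\ad_\xi)^k(\pi_*) \pmod{\nu^N}.
\end{equation*}
By hypothesis $\pi'_* - \pi_* \equiv 0 \pmod{\nu^N}$, hence $\sum_{k\geq 1}\frac{1}{k!}(\ad_\xi)^k(\pi_*) \equiv 0 \pmod{\nu^N}$. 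I want to deduce from this that the \emph{linear} term $\ad_\xi(\pi_*) = \lb{\xi,\pi_*}_S$ already vanishes modulo $\nu^N$, which is exactly the assertion $\lb{\xi,\pi_*}_S \equiv 0 \pmod{\nu^N}$.

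The key step is an induction on $N$ to show simultaneously that $\xi \equiv 0 \pmod{\nu}$ forces the higher brackets to be of higher order: I claim that if $\xi = \xi_m\nu^m + \xi_{m+1}\nu^{m+1} + \cdots$ starts at order $m\geq 1$, then $(\ad_\xi)^k(\pi_*)$ is divisible by $\nu^{km}$, so in particular for $k\geq 2$ it is divisible by $\nu^{2m} = \nu^m\cdot\nu^m$, which is a strictly higher order than the linear term $\lb{\xi,\pi_*}_S$ (divisible by $\nu^m$ but a priori not $\nu^{m+1}$). Thus, writing the lowest-order coefficient: if $\lb{\xi,\pi_*}_S \not\equiv 0 \pmod{\nu^N}$, let $p < N$ be the smallest index with a nonzero coefficient $\lb{\xi,\pi_*}_S[\,\cdot\,] $ at $\nu^p$; since $\xi$ starts at order $\geq 1$, we have $p\geq 1$, and by the divisibility claim all terms $\frac{1}{k!}(\ad_\xi)^k(\pi_*)$ with $k\geq 2$ are divisible by $\nu^{p+1}$ (because they are divisible by $\nu^{2m}$ with $m\geq 1$ the starting order of $\xi$, and one checks $2m > p$ whenever $p$ is the order of the linear term, which equals $m$ when $\xi_m\neq 0$ and $\lb{\xi_m,\pi_0}_S\neq 0$). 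Hence the coefficient of $\nu^p$ in $e^{\ad_\xi}(\pi_*) - \pi_*$ equals the coefficient of $\nu^p$ in $\lb{\xi,\pi_*}_S$, which is nonzero, contradicting $\pi'_* \equiv \pi_* \pmod{\nu^N}$ (as $p < N$). Therefore $\lb{\xi,\pi_*}_S \equiv 0 \pmod{\nu^N}$, and reducing modulo $\nu^N$ this says precisely that $\xi \bmod \nu^N$ is a $1$-cocycle of $(\A^\nu/\ideal{\nu^N},\pi_*\bmod\nu^N)$, i.e. $\delta^1_{\pi_*\bmod\nu^N}(\xi\bmod\nu^N) = 0$.

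The subtle point — and the main thing to handle carefully — is the case where $\lb{\xi_m,\pi_0}_S = 0$, i.e. the linear term $\lb{\xi,\pi_*}_S$ vanishes to order strictly higher than $m$; then the naive order count $2m > m$ is not immediately the right comparison and one must instead argue that if $\lb{\xi,\pi_*}_S$ has order $q$ with $q < N$, then still $q \geq m$, and any quadratic-or-higher term has order $\geq 2m$, so one needs $2m > q$. This can fail in principle, so the clean way is to argue term by term by a secondary induction: peel off $\xi_m$ first, note $(\ad_\xi)^k(\pi_*)$ at the lowest relevant orders only sees $\ad_{\xi_m}$, and use that the coefficient of $\nu^m$ in $e^{\ad_\xi}(\pi_*) - \pi_*$ is exactly $\lb{\xi_m,\pi_0}_S$, the coefficient of $\nu^{m+1}$ involves $\lb{\xi_m,\pi_1}_S + \lb{\xi_{m+1},\pi_0}_S$ plus possibly $\tfrac12\lb{\xi_m,\lb{\xi_m,\pi_0}_S}_S$-type terms, etc. Since $e^{\ad_\xi}(\pi_*) - \pi_* \equiv 0 \bmod \nu^N$, reading off these coefficients for $\nu^m, \nu^{m+1}, \dots, \nu^{N-1}$ in order shows inductively that the coefficient of $\nu^j$ in $\lb{\xi,\pi_*}_S$ vanishes for each $j < N$ (the quadratic corrections at order $\nu^j$ only involve $\lb{\xi_i,\pi_*}_S$ coefficients at orders $< j$, already known to vanish, bracketed again with $\xi$, hence zero). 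This is really just the observation that $e^{\ad_\xi}(\pi_*) - \pi_* = \lb{\xi,\pi_*}_S + (\text{terms lying in the image of } \ad_\xi \text{ applied to } \lb{\xi,\pi_*}_S)$, so the vanishing of the left side modulo $\nu^N$ propagates to the vanishing of $\lb{\xi,\pi_*}_S$ modulo $\nu^N$ by induction on the order.
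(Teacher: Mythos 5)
Your final argument is correct and is essentially the paper's own proof: the paper also proceeds by induction on the order, using that once $\lb{\xi,\pi_*}_S\equiv 0\bmod\nu^N$ is known, every term $(\ad_\xi)^k(\pi_*)=(\ad_\xi)^{k-1}\bigl(\lb{\xi,\pi_*}_S\bigr)$ with $k\geq 2$ vanishes modulo $\nu^{N+1}$ (since $\xi\equiv 0\bmod\nu$), so the hypothesis forces the linear term to vanish one order further. Your intermediate detour through the lowest-order contradiction argument is indeed flawed for the reason you identify, but the ``secondary induction'' you substitute for it is exactly the published argument.
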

\begin{proof}
By hypothesis, we have the following equality : 
\begin{equation}\label{eqn:exp1}
\pi'_* = e^{\ad_\xi}(\pi_*)
= \pi_* + \sum_{k\in\N^*} \frac{1}{k!} 
\underbrace{\lb{\xi,\lb{\xi,\dots\,,\lb{\xi,\pi_*}_S\dots}_S}_S}
_{\hbox{$k$ brackets}}.
\end{equation}
We will prove the desired result by induction on $N\in\N^*$.
If $N=1$, then
the hypothesis becomes the trivial one $\pi_0=\pi_0$ and $\xi\,\mod\nu
= 0$ is trivially a $1$-cocycle of the Poisson algebra $(\A,\pi_0)$.

Now, suppose that $N\geq 1$ and suppose also that if
$\pi_*=\pi'_* \mod \nu^N$, then $\xi\,\mod\nu^N$ is a $1$-cocycle of
the Poisson algebra $\left(\A^\nu/\ideal{\nu^N},\pi_*
\,\mod\nu^N\right)$. Assume that $\pi_*=\pi'_* \mod \nu^{N+1}$, then
of course we have $\pi_*=\pi'_* \mod \nu^N$ and by induction
hypothesis, $\lb{\xi,\pi_*}_S\,\mod\nu^N = 0$. This last equality and Equation
(\ref{eqn:exp1}) lead to :
\begin{eqnarray*}
0 &=& \sum_{k\in\N^*} \frac{1}{k!} 
\overbrace{\lb{\xi,\lb{\xi,\dots\,,\lb{\xi,\pi_*}_S\dots}_S}_S}
^{\hbox{$k$ brackets}} \;\;\;\mod\nu^{N+1} \\
&=& \lb{\xi,\pi_*}_S \;\mod\nu^{N+1},
\end{eqnarray*}
which exactly implies that $\xi\,\mod\nu^{N+1}$ is a Poisson $1$-cocycle of
the Poisson algebra $\left(\A^\nu/\ideal{\nu^{N+1}},\pi_*
\,\mod\nu^{N+1}\right)$, hence the result.
\end{proof}
Now, let us give the main result of this paragraph.
\begin{prp}\label{prp:H1nul}
Let $(\A,\pi_0)$ be a Poisson algebra and
assume that its first Poisson cohomology space is zero:
$H^1(\A,\pi_0)=\lbrace 0\rbrace$.  Let us suppose that $\pi_* = \pi_0
+ \sum_{i\in\N^*}\pi_i\nu^i$ and $\pi'_*= \pi_0 +
\sum_{i\in\N^*}\pi'_i\nu^i$ (with $\pi_i,\pi_i'\in\Vect^2(\A)$, for $i\in\N^*$) 
are two equivalent formal deformations of
$\pi_0$. If 
$$
\pi_* = \pi'_* \mod \nu^{N} \quad\hbox{ for some }N\in\N^*,
$$
then there exists $\psi\in\Vect^1(\A)$ such that:
$$
\pi_N -\pi'_N = \delta^1_{\pi_0}(\psi).
$$ 
\end{prp}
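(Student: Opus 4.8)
The plan is to combine Lemma~\ref{lma:equiv}, Lemma~\ref{lma:xiformalcocycle}, and Lemma~\ref{lma:H1H1formal} in a direct way. Since $\pi_* \sim \pi'_*$, Lemma~\ref{lma:equiv} gives an element $\xi = \sum_{k\geq 1}\xi_k\nu^k \in \Vect^1_0(\A^\nu)$ with $\pi'_* = e^{\ad_\xi}(\pi_*)$. From $\pi_* = \pi'_* \mod\nu^N$, Lemma~\ref{lma:xiformalcocycle} tells us that $\lb{\xi,\pi_*}_S \equiv 0 \mod\nu^N$, i.e.\ $\xi\mod\nu^N$ is a $1$-cocycle of $(\A^\nu/\ideal{\nu^N},\pi_*\mod\nu^N)$. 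By Lemma~\ref{lma:H1H1formal}(b), this cohomology space is zero, so $\xi\mod\nu^N$ is a $1$-coboundary: there exists $H = h_0 + h_1\nu + \cdots + h_{N-1}\nu^{N-1} \in \A^\nu$ with $\xi \equiv -\lb{H,\pi_*}_S \mod\nu^N$, that is, $\xi\mod\nu^N = \delta^1_{\pi_*}(H\mod\nu^N)$.

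Next I would use this coboundary expression for $\xi$ to replace $\xi$ by something vanishing to order $N$ without changing the equivalence class. Concretely, set $\eta := e^{\ad_{-H\nu^0\cdots}}$—more carefully, I would argue that composing the equivalence morphism $e^{\xi}$ with the inner automorphism generated by $H$ (an analogue of conjugating by $e^{\ad_H}$ in the group picture) produces a new $\xi' \in \Vect^1_0(\A^\nu)$ with $\pi'_* = e^{\ad_{\xi'}}(\pi_*)$ and $\xi' \equiv 0 \mod\nu^N$. The cleanest route is probably just to expand directly: write $\xi = \xi_{(N-1)} + \xi^{\geq N}$ where $\xi_{(N-1)}$ is the truncation through order $N-1$, and observe that $\xi_{(N-1)} = -\lb{H,\pi_*}_S \mod\nu^N$ means the first $N-1$ terms of the equivalence are realized by the Hamiltonian flow of $H$, which can be absorbed. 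Alternatively, and perhaps more simply, I would not bother renormalizing $\xi$ at all and instead compute the order-$N$ term of $e^{\ad_\xi}(\pi_*) - \pi_*$ directly.

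For that direct computation, note that in $e^{\ad_\xi}(\pi_*) = \pi_* + \sum_{k\geq 1}\frac{1}{k!}\lb{\xi,\lb{\xi,\dots,\lb{\xi,\pi_*}_S\dots}_S}_S$, every term with $k\geq 2$ involves at least two factors of $\xi$, hence contributes only in degree $\geq 2$ in $\nu$; but more to the point, since $\lb{\xi,\pi_*}_S \equiv 0 \mod\nu^N$, every nested bracket with $k\geq 1$ is $\equiv 0 \mod\nu^N$ except possibly we need the coefficient of exactly $\nu^N$. So $\pi'_N - \pi_N = $ (coefficient of $\nu^N$ in $\lb{\xi,\pi_*}_S$). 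Now $\lb{\xi,\pi_*}_S = \lb{\xi,\pi_0}_S \mod \nu\cdot(\text{higher})$; isolating the $\nu^N$-coefficient and using $\xi \equiv \delta^1_{\pi_*}(H) \mod \nu^N$ together with the graded Jacobi identity $\lb{\lb{H,\pi_*}_S,\pi_*}_S = \tfrac12\lb{H,\lb{\pi_*,\pi_*}_S}_S = 0$, one finds that the $\nu^N$-coefficient of $\lb{\xi,\pi_*}_S$ equals $\lb{\xi_N + (\text{correction from }H),\pi_0}_S = -\delta^1_{\pi_0}(\psi)$ for an explicit $\psi\in\Vect^1(\A)$ built from $\xi_N$ and the $h_i,\pi_j$. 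Then $\pi_N - \pi'_N = \delta^1_{\pi_0}(\psi)$, as desired.

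The main obstacle is the bookkeeping in the last step: carefully extracting the $\nu^N$-coefficient of $\lb{\xi,\pi_*}_S$ and showing it is a $\delta^1_{\pi_0}$-coboundary rather than merely a $\delta^1_{\pi_*}$-coboundary. The key is that $\lb{\xi,\pi_*}_S \equiv 0 \mod \nu^N$ kills all the "mixed" lower-order contributions, so that the order-$N$ term collapses to a bracket against $\pi_0$ alone; this is exactly the mechanism already used in the inductive step of Lemma~\ref{lma:H1H1formal}, and I would mirror that argument. Once the order-$N$ term is written as $\lb{\,\cdot\,,\pi_0}_S$ of some vector field, the statement follows immediately from the definition $\delta^1_{\pi_0} = -\lb{\pi_0}_S$.
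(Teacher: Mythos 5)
Your proposal is correct and follows essentially the same route as the paper: Lemma~\ref{lma:equiv} to get $\xi$, Lemma~\ref{lma:xiformalcocycle} to see $\xi\bmod\nu^N$ is a $1$-cocycle, Lemma~\ref{lma:H1H1formal}(b) to write it as a $\delta^1_{\pi_*}$-coboundary, and then the observation that $\X:=\xi+\lb{H,\pi_*}_S$ vanishes modulo $\nu^N$ while $\lb{\X,\pi_*}_S=\lb{\xi,\pi_*}_S$ by the graded Jacobi identity, so the $\nu^N$-coefficient of $\pi_*-\pi'_*$ collapses to $-\lb{\X_N,\pi_0}_S=\delta^1_{\pi_0}(\X_N)$. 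The paper carries out exactly the ``direct computation'' branch you sketch, with $\psi=\X_N=\xi_N+\sum_{i+j=N}\lb{h_i,\pi_j}_S$.
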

\begin{proof}
Let us consider $(\A,\pi_0)$ a Poisson algebra. We
suppose that $\pi_*$ and $\pi'_*$ are two equivalent formal
deformations of $\pi_0$. According to Lemma \ref{lma:equiv}, there exists 
$\xi=\sum_{k\in\N^*}\xi_k\nu^k\in\Vect_0^1(\A^\nu)$ satisfying :
$\pi'_* = e^{\ad_\xi}(\pi_*)$. Assume that $\pi_* = \pi'_* \mod
\nu^{N}$ for some $N\in\N^*$. Then Lemma \ref{lma:xiformalcocycle}
implies that $\xi\,\mod\nu^N$ is a $1$-cocycle of
the Poisson algebra $\left(\A^\nu/\ideal{\nu^N},\pi_*
\,\mod\nu^N\right)$. By hypothesis, $H^1(\A,\pi_0)=\lbrace 0\rbrace$,
so that, according to the point (b) of Lemma \ref{lma:H1H1formal}, 
there exists an element $H\in\A^\nu$ such that $\X := \xi  +
\lb{H,\pi_*}_S\in\Vect^1(\A^\nu)$ satisfies $\X = 0
\,\mod\nu^N$. We then write $\X = \sum_{i\geq N}\X_i\nu^i$, with
$\X_i\in\Vect^1(\A)$ for all $i\geq N$. 
Now, because $\lb{\xi,\pi_*}_S = \lb{\X,\pi_*}_S$, we have :
\begin{eqnarray*}
\pi_* -\pi'_*\;\mod\nu^{N+1} 
&=& \pi_* - e^{\ad_\xi}(\pi_*)  \;\mod\nu^{N+1}
 = \pi_* - e^{\ad_\X}(\pi_*) \;\mod\nu^{N+1} \\
&=& -\lb{\X,\pi_*}_S \;\mod\nu^{N+1}
 =  -\lb{\X_N,\pi_0}_S \nu^N.
\end{eqnarray*}
We conclude that $\pi_N - \pi'_N = -\lb{\X_N,\pi_0}_S =
\delta^1_{\pi_0}(\X_N)$, with $\X_N\in\A$, which the desired result. 
\end{proof}
Combining Proposition \ref{magic} and Proposition \ref{prp:H1nul}, we
obtain the proposition \ref{thmintro} anounced in the introduction. In
particular, we obtain the
\begin{prp}\label{thm}
Let $(\A,\pi_0)$ be a Poisson algebra. 
Using the notation and under the hypotheses of Proposition \ref{magic} and
if, in addition, the first Poisson cohomology space $H^1(\A,\pi_0)$ is
\emph{zero}, then we have the following:
\begin{enumerate}
\item[(a)] For any formal deformation $\pi_*$ of $\pi_0$, there exists
a \emph{unique} element $\aa$ of $\S$, such that $\pi_*$ is
equivalent to $\pi_*^\aa$;
\item[(b)] For any $m$-th order deformation $\pi_{(m)}$ of $\pi_0$
($m\in\N^*$), there exists a \emph{unique} element
$\aa_{(m+1)}\in\S$ of the form 
$\aa_{(m+1)}=(a_1^k,a_2^k,\dots,a_{m}^k,0,0,\dots)_{k\in\K}$ (i.e.,
  $\aa_{(m+1)}=(a^k_n)_{{k\in\K}\atop{n\in\N^*}}$ with $a^k_n=0$, for
  every $k\in\K$ and $n\geq m+1$),  such that $\pi_{(m)}$ is
equivalent to $\pi_*^{\aa_{(m+1)}}$ modulo $\nu^{m+1}$, i.e., in $\A^{\nu}/\ideal{\nu^{m+1}}$.
\end{enumerate}
\end{prp}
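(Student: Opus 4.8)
The plan is to deduce Proposition~\ref{thm} directly from Proposition~\ref{magic} and Proposition~\ref{prp:H1nul}, the existence half being already contained in Proposition~\ref{magic}(a)--(b), so that the only new content is the \emph{uniqueness} of the parameter $\aa\in\S$. I would fix a formal deformation $\pi_*$ of $\pi_0$, take two elements $\aa=(a_n^k)$ and $\bb=(b_n^k)$ of $\S$ with $\pi_*\sim\pi_*^\aa$ and $\pi_*\sim\pi_*^\bb$, and argue that $a_n^k=b_n^k$ for all $k\in\K$ and all $n\in\N^*$ by induction on $n$. By transitivity of equivalence, $\pi_*^\aa\sim\pi_*^\bb$; write $\pi_*^\aa=\pi_0+\sum_{n\geq1}\pi_n^\aa\nu^n$ and $\pi_*^\bb=\pi_0+\sum_{n\geq1}\pi_n^\bb\nu^n$ as in Proposition~\ref{magic}.

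For the base step $n=1$: since $\Psi_1^\aa=\Psi_1^\bb=0$, we have $\pi_1^\aa=\sum_{k\in\K}a_1^k\vartheta_k$ and $\pi_1^\bb=\sum_{k\in\K}b_1^k\vartheta_k$. Because $\pi_*^\aa\sim\pi_*^\bb$ agree modulo $\nu^1$ (both equal $\pi_0$), Proposition~\ref{prp:H1nul} applied with $N=1$ gives $\psi\in\Vect^1(\A)$ with $\pi_1^\aa-\pi_1^\bb=\delta^1_{\pi_0}(\psi)$, i.e.\ $\sum_{k\in\K}(a_1^k-b_1^k)\vartheta_k$ is a $2$-coboundary; since the classes of the $\vartheta_k$ form a basis of $H^2(\A,\pi_0)$ and the support is finite, $a_1^k=b_1^k$ for all $k$. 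For the inductive step, suppose $a_m^k=b_m^k$ for all $k\in\K$ and all $m<n$. By the dependence hypothesis on the $\Psi_m^\cdot$ (each $\Psi_m^\aa$ depends only on $a_\ell^k$ with $\ell<m$), we get $\Psi_m^\aa=\Psi_m^\bb$ and hence $\pi_m^\aa=\pi_m^\bb$ for all $m<n$; thus $\pi_*^\aa=\pi_*^\bb\mod\nu^n$. Now apply Proposition~\ref{prp:H1nul} with these two deformations and $N=n$: there exists $\psi\in\Vect^1(\A)$ with $\pi_n^\aa-\pi_n^\bb=\delta^1_{\pi_0}(\psi)$. Again $\Psi_n^\aa=\Psi_n^\bb$ by the induction hypothesis, so $\pi_n^\aa-\pi_n^\bb=\sum_{k\in\K}(a_n^k-b_n^k)\vartheta_k$, which being a $2$-coboundary with the $\vartheta_k$ inducing a basis of $H^2(\A,\pi_0)$ forces $a_n^k=b_n^k$ for all $k$. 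This closes the induction and proves part~(a).

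For part~(b), I would run the same argument modulo $\nu^{m+1}$: an $m$-th order deformation $\pi_{(m)}$ is equivalent (modulo $\nu^{m+1}$) to some $\pi_*^{\aa}$ by Proposition~\ref{magic}(b), and inspecting that proof shows one may take $\aa=\aa_{(m+1)}$ of the stated truncated form $(a_1^k,\dots,a_m^k,0,0,\dots)_{k\in\K}$, since only $\xi_1,\dots,\xi_m$ and $a_1^k,\dots,a_m^k$ are produced. Uniqueness of $\aa_{(m+1)}$ among truncated parameters follows from the finite induction $n=1,\dots,m$ exactly as above, using the $n$-th order version of Proposition~\ref{prp:H1nul} (which holds since that proposition and Lemmas~\ref{lma:H1H1formal}, \ref{lma:xiformalcocycle} are stated for $\A^\nu/\ideal{\nu^N}$ as well) and the fact that two truncated parameters agreeing in the first $m$ slots agree everywhere.

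I expect the only delicate point to be bookkeeping rather than mathematics: making sure that at each stage the equality $\pi_*^\aa=\pi_*^\bb\mod\nu^n$ is genuinely available before invoking Proposition~\ref{prp:H1nul}, which rests entirely on the hypothesis that $\Psi_n^\aa$ depends only on the $a_m^k$ with $m<n$ --- this is what lets the induction propagate. The passage from ``$\sum_k(a_n^k-b_n^k)\vartheta_k$ is a coboundary'' to ``all coefficients vanish'' is immediate from the definition of a basis of $H^2(\A,\pi_0)$ together with the finite-support condition built into $\S$, and needs no computation.
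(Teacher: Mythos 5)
Your proof is correct and follows essentially the same route as the paper: existence comes from Proposition~\ref{magic}, and uniqueness is obtained by combining Proposition~\ref{prp:H1nul} with the facts that $\Psi_n^\aa$ depends only on the coefficients of index $<n$ and that the classes of the $\vartheta_k$ form a basis of $H^2(\A,\pi_0)$. The only (immaterial) difference is that the paper phrases the uniqueness argument via the minimal index $N$ at which $\pi_N^\aa\not=\pi_N^\bb$ rather than as an explicit induction on $n$.
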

\begin{proof}
The existence of the elements $\aa$ and $\aa_{(m+1)}$ are given by
Proposition \ref{magic}, we now study the unicity.
To do this, we point out that if
$\aa=(a_n^k)_{{k\in\K}\atop{n\in\N^*}}$ and
$\bb=(b_n^k)_{{k\in\K}\atop{n\in\N^*}}$ are two elements of $\S$, defining 
two different formal deformations $\pi_*^\aa$ and $\pi_*^\bb$ of the
form (\ref{defo_family}) and
$N:=\min\{n\in\N^*|\pi_n^\aa\not=\pi_n^\bb\}$, then
$\Psi^\aa_N = \Psi^\bb_N$ and
$\pi_N^\aa-\pi_N^\bb$ is an element of
$\bigoplus_{k\in\K}\F\,\vartheta_k$ which is a complementary of $B^2(\A,\pi_0)$ in
$Z^2(\A,\pi_0)$. 
According to Proposition \ref{prp:H1nul}, if $\pi_*^\aa$ and
$\pi_*^\bb$ were equivalent, then $\pi_N^\aa-\pi_N^\bb$ should be a
Poisson coboundary of $(\A,\pi_0)$ (an element of $B^2(\A,\pi_0)$), 
we then conclude that $\pi_*^\aa$ and $\pi_*^\bb$ can
not be equivalent.
\end{proof}
\begin{rem}
Notice that this result, and also the propositions \ref{magic} and
\ref{prp:H1nul}, could be stated in an associative or Lie
context, in a very analogous way (by replacing the Poisson cohomology
by the Hochschild or Chevalley-Eilenberg cohomology and the Schouten bracket by
the appropriate graded Lie algebra structure on the spaces of cochains).
\end{rem}
\section{Formal deformations of Poisson structures in dimension two
  and three}
\label{sec:dim3}

In this section, we consider a large family of Poisson structures on the
affine space of dimension three $\F^3$ and on singular surfaces in $\F^3$. We study their formal
deformations. Using the general results obtained in Section \ref{sec:part_case} and
the Poisson cohomology of these Poisson structures, obtained in \cite{AP},
we obtain an explicit expression of all their formal deformations, up to equivalence.
For more details about these Poisson
brackets and their Poisson cohomology, see \cite{AP}. 
As previously, $\F$ denotes an arbitrary field of characteristic zero.

\subsection{Poisson structures on $\F^3$ associated to a polynomial}
\label{sbsec:Pstr}

In this paragraph, we denote by $\A$ the polynomial algebra
$\A=\F[x,y,z]$. To each polynomial $\v\in\A$, one associates naturally a
Poisson structure $\PB_\v$ on $\A$, defined by the brackets:
\begin{eqnarray}
\label{bracket}
\pb{x,y}_{\v}=\pp{\varphi}{z},\quad \pb{y,z}_{\v}=\pp{\varphi}{x},
\quad \pb{z,x}_{\v}=\pp{\varphi}{y}.
\end{eqnarray}
It is indeed easy to show that the skew-symmetric biderivation $\PB_\v$,
explicitly given by:
\begin{eqnarray}
\label{bracket_phi}
\PB_{\v}=\pp{\v}{z}\;\pp{}{x}\wedge\pp{}{y}+\pp{\v}{x}\;\pp{}{y}\wedge\pp{}{z}
+\pp{\v}{y}\;\pp{}{z}\wedge\pp{}{x},
\end{eqnarray}
satisfies the Jacobi identity, i.e., equips the associative
commutative algebra $\A$ with a Poisson structure.
In the following, we will assume that $\v$ is a weight homogeneous
polynomial of (weighted) degree $\w(\v)\in\N$, i.e., that there exists
(unique) positive integers
$\w_1,\w_2,\w_3\in\N^*$ (the \emph{weights} of the variables $x$, $y$ and
$z$), without any common divisor, such that:
\begin{equation}\label{euler_form}
\w_1 \,x\, \pp{\v}{x} + \w_2\, y\, \pp{\v}{y} + \w_3\, z\, \pp{\v}{z}= \w(\v) \v.
\end{equation}
This equation is also called the \emph{Euler Formula}. If this weight
homogeneous polynomial $\v$ has a so-called isolated singularity (at
the origin), then the Poisson cohomology of the Poisson algebra
$(\A,\PB_\v)$ has been explicitly determined in
\cite{AP}. Recall that a weight homogeneous polynomial
$\v\in\F[x,y,z]$ is said to have an \emph{isolated singularity} (at
the origin) if the vector space
\begin{eqnarray}
\label{A_sing}
\A_{sing}(\v):=\F[x,y,z]/\ideal{\pp{\v}{x}, \pp{\v}{y}, \pp{\v}{z}}
\end{eqnarray}
is finite-dimensional. Its dimension is then denoted by $\mu$ and
called the \emph{Milnor number} associated to $\v$.  When $\F=\C$,
this amounts, geometrically, to saying that the surface
$\Fe_{\v}:\{\v=0\}$ has a singular point only at the origin.
In \cite{AP}, it has been shown that the singularity of $\v$ intervenes 
in the Poisson cohomology of $(\A,\PB_\v)$, with
$\A_{sing}(\v)$. In the following, we will see that it also appears 
in the formal deformations of $\PB_\v$.

In the following, the polynomial $\v$ will always be a weight homogeneous
polynomial with an isolated singularity. The corresponding weights of the three variables
($\w_1$, $\w_2$ and $\w_3$)
are then fixed and the weight homogeneity of any polynomial in $\F[x,y,z]$
has now to be understood as associated to these weights. We will also use
the fact, that, for $\A=\F[x,y,z]$, we have natural isomorphisms 
\begin{eqnarray}
\label{isomultider}
\Vect^0(\A)\simeq\Vect^3(\A)\simeq \A, 
\qquad \Vect^1(\A)\simeq\Vect^2(\A)\simeq \A^3,
\end{eqnarray}
chosen as 
$$
\renewcommand{\arraystretch}{1.3}
\begin{array}{ccc}
    \Vect^1(\A)&\longrightarrow& \A^3\\
       V &\longmapsto& (V[x],V[y],V[z]);
\end{array} \qquad\begin{array}{ccc}
    \Vect^2(\A)&\longrightarrow& \A^3\\
       V &\longmapsto& (V[y,z],V[z,x],V[x,y]);
\end{array} 
$$
and $\Vect^3(\A)\longrightarrow \A : V \longmapsto V[x,y,z]$.

\smallskip

The elements of $\A^3$ are viewed as vector-valued functions on $\A$,
so we denote them with an arrow, like $\vec{F}\in\A^3$. 
In $\A^3$, let $\cdot$, $\times$ denote respectively the usual inner and cross
products, while $\vec{\nabla}$, $\vec{\nabla}\times$, $\div$ denote
respectively the gradient, the curl and the divergence operators.
For example, with these notations and the above isomorphisms, the
skew-symmetric biderivation $\PB_{\v}$ is identified with the element
$\vn\v$ of $\A^3$.
Similarly, the so-called \emph{Euler derivation}
(associated to the weights of the variables), 
$\vec{e}_\w := \w_1 \,x\, \pp{}{x} + \w_2\, y\, \pp{}{y} + \w_3\, z\,
\pp{}{z}$ is viewed as the
element $\vec{e}_\w:=(\w_1\,x, \w_2\,y, \w_3\,z)\in\A^3$ and, with the
notations above, the Euler
formula~(\ref{euler_form}), for a weight homogeneous polynomial $F\in\A$ of
(weighted) degree $\w(F)$ becomes:
\begin{equation}\label{euler}
\vn F\cdot\vec{e}_\w = \w(F) \, F.
\end{equation}
\begin{rem}\label{rmk:coboundary}
Using the identifications above, it is possible to write the Poisson
coboundary operator, associated to $(\A,\PB_\v)$, in terms of elements in
$\A$ and elements in $\A^3$. Denoting this coboundary operator by
$\de_\v^k$, we obtain:
\begin{eqn}{delta_i}
\de^0_{\v}(F) &=& \vn F\times\vn\v, \quad \hbox{ for }F\in\A\simeq\Vect^0(\A),\\
\de^1_{\v}(\vec{F}) &=& -\vn(\vec{F}\cdot\vn\v)+\div(\vec{F})\vn\v, 
\quad\hbox{ for } \vec{F}\in\A^3\simeq \Vect^1(\A),\\
\de^2_{\v}(\vec{F}) &=& -\vn\v\cdot(\vn\times\vec{F}),
\quad\hbox{ for } \vec{F}\in\A^3\simeq \Vect^2(\A).
\end{eqn}%
From \cite{AP}, we know that, if $\v$ is a weight homogeneous
polynomial with an isolated singularity, then the Casimirs of the
Poisson algebra $(\A,\PB_\v)$ (i.e., the elements of the center of the
Poisson bracket, which are also the elements of $H^0(\A,\PB_\v)=\Ker
\de^0_\v$) are exactly the polynomials in $\v$.
\end{rem}
\subsection{The second Poisson cohomology space of $(\A,\PB_\v)$}
\label{H2}

We recall from~\cite{AP} that, as $\v\in\F[x,y,z]$ is a weight homogeneous
polynomial with an isolated singularity, the second Poisson cohomology space
associated to $(\A,\PB_\v)$ is given~by:
\begin{eqn}{h2old}
H^2(\A,\PB_\v)&\simeq& \renewcommand{\arraystretch}{0.7}
\ds\bigoplus
_{\begin{array}{c}\scriptstyle j=1\\\scriptstyle \w(u_j)\not=\w(\v)-|\w|\end{array}}^{\mu-1}
           \F[\v]\vn u_j
                       \oplus\renewcommand{\arraystretch}{0.7}
		   \ds\bigoplus_{\begin{array}{c}\scriptstyle j=0\\\scriptstyle \w(u_j)=\w(\v)-|\w|\end{array}}^{\mu-1}
           \F[\v]\,u_j\vn\v\\
       &&\qquad       \ds   \oplus \renewcommand{\arraystretch}{0.7}
                   \bigoplus_{\begin{array}{c}\scriptstyle j=1\\\scriptstyle \w(u_j)=\w(\v)-|\w|\end{array}}^{\mu-1}
           \F\vn u_j,
\end{eqn}%
where $|\w|:=\w_1+\w_2+\w_3$ denotes the sum of the weights of the
three variables and where the family $u_0:=1,
u_1,\dots,u_{\mu-1}\in\A$ is composed of weight homogeneous
polynomials in $\A$ whose images in $\A_{sing}(\v)$ give a basis of this
$\F$-vector space (and $u_0=1$).  In order to study the formal
deformations of the Poisson bracket $\PB_{\v}$, we need another basis
of $H^2(\A,\v)$. 
\begin{lma}
 \label{rewriting_h2}
If $\v\in\A=\F[x,y,z]$ is a weight homogeneous polynomial with an isolated
singularity, then the second Poisson
cohomology space associated to $(\A,\PB_{\v})$ is the $\F[\v]$-module:
\begin{eqnarray}\label{h2new}
H^2(\A,\PB_\v) &\simeq& 
\left\lbrace \renewcommand{\arraystretch}{1.5}
   \begin{array}{rcl}
     \ds\bigoplus_{j=0}^{\mu-1} \F[\v]\, u_j \vn\v \;\oplus\;
           \bigoplus_{j=1}^{\mu-1} \F\,\vn u_j,\qquad \hbox{ if }\w(\v)=|\w|,\\
 {}\\
     \ds\bigoplus_{j=1}^{\mu-1} \F[\v]\, u_j \vn\v \;\oplus\;
           \bigoplus_{j=1}^{\mu-1} \F\,\vn u_j, \qquad\hbox{ if }\w(\v)\not=|\w|,
   \end{array}
\right.\nonumber\\
 &\simeq& \bigoplus_{j\in\E_{\v}} 
           \F[\v]\, u_j \vn\v
                      \, \oplus\,
           \bigoplus_{j=1}^{\mu-1} \F\,\vn u_j,
\end{eqnarray}
where we have used the above notation and where
we have denoted by $\E_{\v}$, the set
$$
\renewcommand{\arraystretch}{1.5}
\E_{\v}:=
\left\lbrace
   \begin{array}{rcl}
     \{0,\dots,\mu-1\},&\hbox{if}&\w(\v)=|\w|,\\
     \{1,\dots,\mu-1\},&\hbox{if}&\w(\v)\not=|\w|.
   \end{array}
\right.
$$
\end{lma}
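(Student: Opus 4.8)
The plan is to show that the two $\F[\v]$-modules in~(\ref{h2old}) and~(\ref{h2new}) actually coincide inside $\Vect^2(\A)\simeq\A^3$, by rewriting the three types of generators appearing in~(\ref{h2old}) in terms of the generators appearing in~(\ref{h2new}). The only issue is the treatment of those indices $j$ for which $\w(u_j)\not=\w(\v)-|\w|$, where~(\ref{h2old}) uses the generator $\F[\v]\vn u_j$ rather than $\F[\v]\,u_j\vn\v$; for all other $j$ the generators already agree (up to reindexing through the set $\E_\v$, and keeping in mind that $u_0=1$ so that $\vn u_0=0$, which is why the $j=0$ term never contributes a $\vn u_j$). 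So the heart of the argument is a single identity expressing $\vn u_j$, modulo a Poisson coboundary, as a multiple of $u_j\vn\v$ (and vice versa, depending on the weight).

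First I would record the homogeneity bookkeeping: if $F\in\A$ is weight homogeneous of degree $\w(F)$, then $\vn F\in\A^3\simeq\Vect^2(\A)$ is ``weight homogeneous'' of degree $\w(F)-|\w|$ in the natural grading (each $\p/\p x_i$ lowers degree by $\w_i$), while $u_j\vn\v$ has degree $\w(u_j)+\w(\v)-|\w|$; hence $\vn u_j$ and $u_j\vn\v$ lie in the same graded piece exactly when $\w(u_j)=\w(u_j)+\w(\v)$, impossible, so I instead look for the coboundary linking them. The key computation is to apply the explicit formula for $\de^1_\v$ from Remark~\ref{rmk:coboundary}, namely $\de^1_\v(\vec F)=-\vn(\vec F\cdot\vn\v)+\div(\vec F)\,\vn\v$, to a well-chosen $\vec F$. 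Taking $\vec F = F\,\vec e_\w$ for a weight homogeneous $F$, and using the Euler formula in the form~(\ref{euler}), one gets $\vec F\cdot\vn\v = F\,(\vec e_\w\cdot\vn\v)=\w(\v)\,F\v$ and $\div(F\vec e_\w)=\vn F\cdot\vec e_\w + F\,\div(\vec e_\w) = (\w(F)+|\w|)\,F$; hence
\begin{equation*}
\de^1_\v(F\vec e_\w) = -\w(\v)\,\vn(F\v) + (\w(F)+|\w|)\,F\,\vn\v
 = -\w(\v)\,\v\,\vn F + \bigl(\w(F)+|\w|-\w(\v)\bigr)\,F\,\vn\v .
\end{equation*}
Specializing $F = u_j$ (so $\w(F)=\w(u_j)$) this says that, modulo $B^2(\A,\PB_\v)$, the element $\w(\v)\,\v\,\vn u_j$ equals $\bigl(\w(u_j)+|\w|-\w(\v)\bigr)\,u_j\vn\v$; more generally, replacing $F$ by $\v^m u_j$, the class of $\v^{m+1}\vn u_j$ is a nonzero scalar multiple (the scalar being $(m+1)\w(\v) - (\w(u_j)+|\w|-\w(\v))\cdot 0$... here one just reads off $\bigl((m+1)\w(\v)+\w(u_j)+|\w|-(m+1)\w(\v)\bigr)$, i.e. the coefficient stays $\w(u_j)+|\w|-\w(\v)$ times a power of $\v$) of the class of $\v^{m}u_j\vn\v$, provided $\w(u_j)\neq\w(\v)-|\w|$. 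That is precisely the hypothesis on the first block of~(\ref{h2old}), so the coefficient is invertible in $\F$ and we may trade each $\F[\v]\vn u_j$ (with $\w(u_j)\neq\w(\v)-|\w|$) against $\F[\v]\,u_j\vn\v$, after separating off the ``$\v$-free'' part $\F\vn u_j$, which survives.

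It then remains to assemble the pieces. For $\w(\v)\neq|\w|$: the index $j=0$ satisfies $\w(u_0)=0\neq\w(\v)-|\w|$ is possible or not, but $u_0=1$ so $\F[\v]\vn u_0=0$ contributes nothing and $\F[\v]u_0\vn\v=\F[\v]\vn\v$ is already present; for $j\geq 1$ with $\w(u_j)\neq\w(\v)-|\w|$ the displayed identity converts $\F[\v]\vn u_j$ into $\F[\v]u_j\vn\v\oplus\F\vn u_j$; for $j\geq1$ with $\w(u_j)=\w(\v)-|\w|$ the generators $\F[\v]u_j\vn\v$ and $\F\vn u_j$ already appear verbatim in~(\ref{h2old}). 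Collecting, the total module is $\bigoplus_{j=1}^{\mu-1}\F[\v]u_j\vn\v\oplus\bigoplus_{j=1}^{\mu-1}\F\vn u_j$, matching~(\ref{h2new}) with $\E_\v=\{1,\dots,\mu-1\}$; and when $\w(\v)=|\w|$ the only change is that the $j=0$ term now reads $\w(u_0)=0=\w(\v)-|\w|$, so $\F[\v]\,u_0\vn\v=\F[\v]\vn\v$ is taken with $j=0$ included, giving $\E_\v=\{0,\dots,\mu-1\}$, again as claimed. One should also check these rewritings do not collapse the direct sum — i.e. that the new generators are still independent modulo $B^2$ — but this follows because the change is triangular with invertible diagonal coefficients with respect to the $\v$-adic / weight filtration, so it is a genuine change of $\F[\v]$-basis rather than a mere containment. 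The main obstacle is simply keeping the weight/degree bookkeeping and the case split on $\w(\v)$ versus $|\w|$ consistent with the conventions of~\cite{AP}; the algebra itself is the one Euler-formula computation above.
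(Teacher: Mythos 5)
Your proof is correct and follows essentially the same route as the paper: the paper's entire argument consists of the single computation $\de^1_\v(\v^i u_j\vec e_\w)=(\w(u_j)-\w(\v)+|\w|)\,\v^i u_j\vn\v-\w(\v)\,\v^{i+1}\vn u_j$, which is exactly the identity you derive from the Euler formula, followed by the same trading of $\F[\v]\vn u_j$ against $\F\vn u_j\oplus\F[\v]u_j\vn\v$ when $\w(u_j)\not=\w(\v)-|\w|$. The only difference is that you spell out the case-by-case bookkeeping that the paper leaves as ``easy to verify'' (your parenthetical scalar computation is garbled and the remark that $\F[\v]\vn\v$ is ``already present'' when $\w(\v)\not=|\w|$ is inaccurate --- it is in fact a coboundary, by the same identity with $j=0$ --- but neither slip affects the argument).
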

\begin{proof}
Using (\ref{delta_i}), we can compute, for all $i\in\N$ and $0\leq j\leq\mu-1$, 
\begin{eqnarray*}
\de_{\v}^1\Bigl(\v^i u_j \vec{e}_{\w}\Bigr) 
= \Bigl(\w(u_j)-\w(\v)+|\w|\Bigr)\v^i u_j\vn\v-\w(\v)\,\v^{i+1}\vn u_j.
\end{eqnarray*}
Now, using this equation, it is easy to verify that (\ref{h2old}) can also be
written as~(\ref{h2new}).
\end{proof}

\subsection{The formal deformations of $\PB_\v$}
\label{defo_dim3}

In this paragraph, our purpose is to consider the formal deformations of the Poisson
bracket $\PB_{\v}$ on $\F^3$, where $\v$ is a weight homogeneous
polynomial with an isolated singularity.
For this work, the Poisson cohomology that appears is the one associated to the Poisson
algebra $(\A=\F[x,y,z],\PB_{\v})$.

We first need to obtain a formula for the Schouten bracket of two specific
skew-symmetric biderivations of $\A$. In fact, for the study of the formal deformations of
$\PB_\v$, we will see that one only has to consider the skew-symmetric biderivations of the form 
$F\,\vn G\in\A^3\simeq\Vect^2(\A)$, with $F,G\in\A$.
Let us compute the Schouten bracket of two such skew-symmetric biderivations.
So let $F,G,H,L\in\A$. 
We compute the Schouten bracket $\lb{F\vn L,G\,\vn
  H}_S\in\Vect^3(\A)\simeq\A$, which we identify (according to (\ref{isomultider}))
  to its value $\lb{F\vn L,G\,\vn H}_S[x,y,z]\in\A$ and obtain:
\begin{equation}\label{eq:beleq}
\lb{F\,\vn L, G\,\vn H}_S=
F\,\vn L\cdot\left(\vn G\times\vn H\right)
+ G\,\vn H\cdot\left(\vn F\times\vn L\right).
\end{equation}
According to this equation, we have, for every $l,m\in\N$ and every $0\leq
i,j\leq \mu-1$,
\begin{eqnarray}
 \lb{\v^l u_i \vn\v, \v^m u_j \vn\v}_S = 0,\qquad  
 \lb{\vn u_i, \vn u_j}_S = 0,\label{equal1}
\end{eqnarray}
while, with the help of (\ref{eq:beleq}) and (\ref{delta_i}), we obtain,
\begin{equation}\label{eqn:last}
 \lb{\v^l u_i \vn\v,\vn u_j}_S 
= \delta^2_{\v}\left(\v^l u_i\vn u_j\right).
\end{equation}

The following proposition gives a formula for all formal deformations of
$\PB_\v$, up to equivalence.
\begin{prp}\label{prp:defo_A_v}
Let $\v\in\A=\F[x,y,z]$ be a weight homogeneous polynomial with an isolated
singularity. Consider the Poisson algebra $(\A,\PB_\v)$ associated
to~$\v$, where $\pi_0:=\PB_\v$ is the Poisson bracket given by
$$
\PB_\v=\pp{\v}{x}\; \pp{}{y}\wedge\pp{}{z} +
  \pp{\v}{y}\; \pp{}{z}\wedge\pp{}{x} +
  \pp{\v}{z}\; \pp{}{x}\wedge\pp{}{y}.
$$
Then we have the following:
\begin{enumerate}
\item[(a)] For all families of constants 
$\left(c^k_{l,i}\in\F\right)_{{(l,i)\in\N\times\E_\v}\atop{k\in\N^*}}$ 
and $\left(\bar c^{\,k}_r\in\F\right)_{{1\leq r\leq \mu-1}\atop{k\in\N^*}}$, such
  that, for every $k_0\in\N^*$, the sequences $(c^{k_0}_{l,i})_{(l,i)\in\N\times\E_{\v}}$ and
  $(\bar c^{\,k_0}_r)_{1\leq r\leq\mu-1}$ have finite supports, the formula 
\begin{eqnarray}\label{eq:defo_qcq}
\pi_* = \PB_\v + \sum_{n\in\N^*} \pi_n \nu^n,
\end{eqnarray}
where, for all $n\in\N^*$, $\pi_n$ is given by:
\begin{eqn}[2.3]{defo_form}
 \pi_n &=& \renewcommand{\arraystretch}{0.7}
 \ds\sum_{\begin{array}{c}\scriptstyle (l,i)\in\N\times\E_{\v}\\
 \scriptstyle 1\leq r\leq\mu-1\end{array}}
    \ds \sum_{\begin{array}{c}\scriptstyle a+b=n\\\scriptstyle a, b\in\N^*\end{array}} 
        c^a_{l,i}\,\bar c^{\,b}_r\, \v^l\, u_i\, \vn u_r  \\
&+& \renewcommand{\arraystretch}{0.7} 
\ds\sum_{(m,j)\in\N\times\E_{\v}} c^n_{m,j}\, \v^m\,u_j\vn\v 
\;+ \;\ds\sum_{1\leq s\leq\mu-1} \bar c^{\,n}_s\, \vn u_s,
\end{eqn}%
defines a formal deformation of $\PB_\v$, where the $u_j$ ($0\leq
j\leq \mu-1$) are weight homogeneous polynomials of $\A=\F[x,y,z]$, whose images in 
$\A_{sing}(\v)=\F[x,y,z]/\ideal{\pp{\v}{x},\pp{\v}{y},\pp{\v}{z}}$ give a
basis of the $\F$-vector space $\A_{sing}(\v)$ (and $u_0=1$).

\bigskip
\item[(b)] For any formal deformation $\pi_*'$ of $\PB_\v$,
there exist families of constants 
$\left(c^k_{l,i}\right)_{{(l,i)\in\N\times\E_\v}\atop{k\in\N^*}}$ 
and $\left(\bar c^{\,k}_r\right)_{{1\leq r\leq \mu-1}\atop{k\in\N^*}}$
(such that,
for every $k_0\in\N^*$, only a finite number of $c^{k_0}_{l,i}$ and
$\bar c^{\,k_0}_r$ are non-zero), 
for which $\pi_*'$ is equivalent to the
formal deformation $\pi_*$ given by the above formulas
\emph{(\ref{eq:defo_qcq})} and \emph{(\ref{defo_form})}.

\bigskip
\item[(c)] Moreover, if the (weighted) degree of the polynomial $\v$ is not equal
to the sum of the weights: $\w(\v)\not=|\w|$, then for any formal deformation $\pi_*'$ of
$\PB_\v$, there exist \emph{unique} families of constants 
$\left(c^k_{l,i}\right)_{{(l,i)\in\N\times\E_\v}\atop{k\in\N^*}}$ 
and $\left(\bar c^{\,k}_r\right)_{{1\leq r\leq \mu-1}\atop{k\in\N^*}}$
(with, for every $k_0\in\N^*$, only a finite number of non-zero $c^{k_0}_{l,i}$ and
$\bar c^{\,k_0}_r$),
such that $\pi_*'$
is equivalent to the formal deformation $\pi_*$ given by the  
formulas \emph{(\ref{eq:defo_qcq})} and \emph{(\ref{defo_form})}.

This means that
formulas \emph{(\ref{eq:defo_qcq})} and \emph{(\ref{defo_form})} give a system of
representatives for all formal deformations of $\PB_\v$, modulo equivalence.

\bigskip
\item[(d)] Analogous results hold if we replace formal
  deformations by $m$-th order deformations ($m\in\N^*$) and impose in
  \emph{(c)} that $c^k_{l,i}=0$ and $\bar c^{\,k}_r=0$, as soon as $k\geq m+1$. 
\end{enumerate}
\end{prp}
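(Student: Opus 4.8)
The plan is to verify that the family given by~(\ref{eq:defo_qcq}) and~(\ref{defo_form}) fits exactly into the framework of Proposition~\ref{magic} (together with Proposition~\ref{thm} for the uniqueness in item~(c)), using the explicit basis of $H^2(\A,\PB_\v)$ from Lemma~\ref{rewriting_h2} and the Schouten-bracket computations~(\ref{equal1}) and~(\ref{eqn:last}). Concretely, the index set $\K$ of Proposition~\ref{magic} is taken to be $\K:=(\N\times\E_\v)\sqcup\{1,\dots,\mu-1\}$, with the chosen $2$-cocycles being $\vartheta_{(m,j)}:=\v^m u_j\vn\v$ for $(m,j)\in\N\times\E_\v$ and $\vartheta_r:=\vn u_r$ for $1\le r\le\mu-1$; by Lemma~\ref{rewriting_h2} their classes form an $\F$-basis of $H^2(\A,\PB_\v)$. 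The parameters $\aa=(a_n^k)$ of Proposition~\ref{magic} are then precisely the families $(c^n_{m,j})$ and $(\bar c^{\,n}_r)$, and the "background" biderivation is
$$
\Psi_n^\aa \;:=\; \sum_{\substack{(l,i)\in\N\times\E_\v\\ 1\le r\le\mu-1}}\;
\sum_{\substack{a+b=n\\ a,b\in\N^*}} c^a_{l,i}\,\bar c^{\,b}_r\,\v^l\,u_i\,\vn u_r,
$$
which indeed vanishes for $n=1$ (the inner sum over $a+b=1$, $a,b\ge1$ is empty) and depends only on the $c^a,\bar c^{\,b}$ with $a,b\le n-1$, as required. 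So items~(a) of Proposition~\ref{magic}'s hypotheses are immediate from the shape of~(\ref{defo_form}).

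The substantive point is to check that $\pi_*$ as defined is genuinely a formal deformation, i.e.\ that $\lb{\pi_*,\pi_*}_S=0$, equivalently that the cohomological equation~(\ref{eqn:eq_cohom}) holds at every order. Here I would exploit the multiplicativity built into the ansatz: writing $\pi_n = \Psi_n^\aa + \sum_k a^k_n\vartheta_k$, one computes $\sum_{i+j=n+1}\lb{\pi_i,\pi_j}_S$ by bilinearity and splits it according to the three types of Schouten brackets among the summands. By~(\ref{equal1}), brackets of two "$\v^l u_i\vn\v$"-type terms vanish and brackets of two "$\vn u_r$"-type terms vanish; the only surviving contributions are the mixed ones $\lb{\v^l u_i\vn\v,\vn u_r}_S$, which by~(\ref{eqn:last}) equal $\de^2_\v(\v^l u_i\vn u_r)=\delta^2_{\pi_0}(\v^l u_i\vn u_r)$ — a coboundary. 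One must also account for brackets involving the $\Psi$-terms, but since each $\Psi$-term is itself a sum of $\v^l u_i\vn u_r$ and one can again use~(\ref{eq:beleq}) to see that $\lb{\v^l u_i\vn u_r,\,\cdot\,}_S$ against the other $\vn u$- or $u\vn\v$-type terms behaves controllably; the key identity to establish is that, order by order, the total right-hand side of~(\ref{eqn:eq_cohom}) equals $\delta^2_{\pi_0}(\Psi_{n+1}^\aa)$ exactly, because $\Psi^\aa_{n+1}$ was engineered as the relevant primitive. I expect this bookkeeping — matching the combinatorial coefficient $\tfrac12\sum_{a+b=n+1}c^a_{l,i}\bar c^{\,b}_r$ coming from~(\ref{eqn:eq_cohom}) with the coefficient appearing in $\Psi^\aa_{n+1}$, and checking that the $\de^2_\v$ of the "pure" parts of $\pi_i$ contribute nothing extra — to be the main obstacle; it is essentially a direct but careful computation using only~(\ref{eq:beleq}), (\ref{equal1}), (\ref{eqn:last}) and the Leibniz/graded-Jacobi properties of $\LB_S$.

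Once $\lb{\pi_*,\pi_*}_S=0$ is verified, item~(a) of the Proposition follows since~(\ref{defo_form}) is manifestly of the form~(\ref{defo_family}); item~(b) is then exactly item~(a) of Proposition~\ref{magic}; and the $m$-th order statement in~(d) is item~(b) of Proposition~\ref{magic} (with the additional remark, as in Proposition~\ref{thm}(b), that one may truncate the $c^k,\bar c^{\,k}$ beyond order~$m$). For item~(c), I would invoke Proposition~\ref{thm}: when $\w(\v)\ne|\w|$, the results of~\cite{AP} recalled in the paper give $H^1(\A,\PB_\v)=\{0\}$ (this is the content of Proposition~\ref{thmintro_v}(b)), so the uniqueness clause of Proposition~\ref{thm}(a) applies verbatim and pins down the families $(c^k_{l,i})$ and $(\bar c^{\,k}_r)$ uniquely. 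The only thing to be careful about is that the map $\aa\mapsto\pi^\aa_*$ here is injective on the level of the parameters — which it is, because the $\vartheta_k$ are $\F$-linearly independent in $H^2$ and $\Psi^\aa_n$ depends only on lower-order data, so distinct $\aa$ first differ in some $a^k_N$ with $\pi^\aa_N-\pi^\bb_N\notin B^2(\A,\pi_0)$, exactly as in the proof of Proposition~\ref{thm}.
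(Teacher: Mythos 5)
Your strategy is exactly the paper's: identify the ansatz with the framework of Proposition~\ref{magic} (and Proposition~\ref{thm} for the uniqueness in~(c), via $H^1(\A,\PB_\v)=\lbrace 0\rbrace$ when $\w(\v)\not=|\w|$), taking $\K=(\N\times\E_\v)\cup\lbrace 1,\dots,\mu-1\rbrace$, the cocycles $\v^m u_j\vn\v$ and $\vn u_r$ from Lemma~\ref{rewriting_h2}, and $\Psi^\aa_n$ as the ``product'' term. Your reductions of (b), (c) and (d) to Propositions~\ref{magic} and~\ref{thm}, and your injectivity remark for (c), are all fine and match the paper.

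However, the one substantive step --- verifying Equation~(\ref{eqn:eq_cohom}), i.e.\ that $\frac12\sum_{i+j=n+1}\lb{\pi_i,\pi_j}_S=\delta^2_{\pi_0}(\pi_{n+1})$ --- is precisely the step you defer (``I expect this bookkeeping\dots to be the main obstacle''), so the proof is incomplete where it matters most, and the phrase ``behaves controllably'' hides the actual mechanism. The brackets involving the $\Psi$-terms, namely $\lb{\v^l u_i\vn u_r,\v^m u_j\vn u_s}_S$, $\lb{\v^l u_i\vn u_r,\v^m u_j\vn\v}_S$ and $\lb{\v^l u_i\vn u_r,\vn u_s}_S$, are \emph{not} individually zero (they are not covered by~(\ref{equal1})); they cancel only \emph{in pairs}. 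The paper's argument is: split the right-hand side of~(\ref{eqn:eq_cohom}) into the six sums (\ref{sum1})--(\ref{sum6}); in each of (\ref{sum1})--(\ref{sum5}) the coefficient (e.g.\ $c^{\,a}_{l,i}\bar c^{\,b}_r c^{\,c}_{m,j}\bar c^{\,d}_s$ summed over $a+b+c+d=n+1$) is invariant under the involution swapping $(l,i,a)\leftrightarrow(m,j,c)$ while keeping $b,d,r,s$ fixed, whereas formula~(\ref{eq:beleq}) shows that $\lb{\v^l u_i\vn u_r,\v^m u_j\vn u_s}_S+\lb{\v^m u_j\vn u_r,\v^l u_i\vn u_s}_S=0$; hence those five sums vanish by pairing. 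Only~(\ref{sum6}) survives, and by~(\ref{eqn:last}) it equals $\sum c^{\,p}_{l,i}\bar c^{\,q}_r\,\delta^2_{\pi_0}(\v^l u_i\vn u_r)=\delta^2_{\pi_0}(\Psi^\aa_{n+1})=\delta^2_{\pi_0}(\pi_{n+1})$, the rest of $\pi_{n+1}$ being a cocycle. Supplying this symmetrization argument is what closes the gap; the remainder of your outline is correct.
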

\begin{rem}
In particular, the previous proposition implies that, 
if $\w(\v)\not=|\w|$, the formal deformations of $\PB_\v$ defined by
(\ref{eq:defo_qcq}) and (\ref{defo_form}) and different from $\PB_\v$ (i.e.,
with some non all
zero constants $c^k_{l,i}\in\F$ and $\bar c^{\,k}_r\in\F$) are all
non-trivial formal deformations of $\PB_\v$ (i.e., non equivalent to $\PB_\v$).

\end{rem}
\begin{proof}
In fact, by proving the part (a) of the proposition, 
we will show that the Poisson algebra $(\A,\PB_\v)$ verifies
the hypotheses of Proposition \ref{magic}, with:
\begin{eqn*}[1.6]
        \K &=& \left(\N\times\E_\v\right)\cup\lbrace 1,\dots,\mu-1\rbrace,\\
       \aa &=& (c^a_{l,i},\,\bar c^{\,b}_r\,\mid\,(l,i)\in\N\times\E_\v, 1\leq
    r\leq\mu-1, 1\leq a,b\leq n)_{n\in\N^*}\in\S,\\
  \V_{r,j} &=& \v^r\, u_j \vn\v,\quad  (r,j)\in\N\times\E_\v,\\ 
      \V_i &=& \vn u_i, \quad 1\leq i\leq\mu-1,\\
\Psi^\aa_n &=& \renewcommand{\arraystretch}{0.7}
\ds\sum_{\begin{array}{c}\scriptstyle (l,i)\in\N\times\E_{\v}\\
 \scriptstyle r\in\{1,\dots,\mu-1\}\end{array}}\!
    \ds \sum_{\begin{array}{c}\scriptstyle a+b=n\\
 \scriptstyle a,b\in\N^*\end{array}}
        c^a_{l,i}\,\bar c^{\,b}_r\, \v^l\, u_i\, \vn u_r,
\end{eqn*}%
which implies part (b).
According to Proposition \ref{rewriting_h2}, the
elements $\v^r\, u_j \vn\v$ and $\vn u_i$, for $(r,j)\in\N\times\E_\v$, $1\leq
i\leq\mu-1$ give an $\F$-basis of the second Poisson cohomology space
$H^2(\A,\PB_\v)$ so that it suffices, for the parts (a) and (b) of the
proposition, to show that Equations
(\ref{eq:defo_qcq}) and (\ref{defo_form}) define a formal deformation of
$\pi_0=\PB_\v$.
Let us consider some constants $c^k_{l,i}\in\F$ and $\bar
c^{\,k}_r\in\F$, with $(l,i)\in\N\times\E_{\v}$, $1\leq r\leq\mu-1$ and $k\in\N^*$, and 
$\pi_*=\PB_\v + \sum_{k\in\N^*}\pi_k\nu^k$, with each $\pi_k$ given by:
\begin{eqn}[2.3]{defo_form'}
 \pi_k &=& \renewcommand{\arraystretch}{0.7}
 \ds\sum_{\begin{array}{c}\scriptstyle (l,i)\in\N\times\E_{\v}\\
  \scriptstyle r\in\{1,\dots,\mu-1\}\end{array}}
    \ds \sum_{\begin{array}{c}\scriptstyle a+b=k\\\scriptstyle a,b\in\N^*\end{array}} 
        c^a_{l,i}\,\bar c^{\,b}_r\, \v^l\, u_i\, \vn u_r  \\
&+& \renewcommand{\arraystretch}{0.7} 
\ds\sum_{(m,j)\in\N\times\E_{\v}} c^k_{m,j}\, \v^m\,u_j\vn\v 
\;+ \;\ds\sum_{s\in\{1,\dots,\mu-1\}} \bar c^{\,k}_s\, \vn u_s.
\end{eqn}%
(Notice that, for every $k_0\in\N^*$, only a finite number of $c^{k_0}_{l,i}$ and
$\bar c^{\,k_0}_r$ are non-zero.)
We have to verify (see Equation (\ref{eqn:eq_cohom})) that the following equation holds,
for every $n\in\N$,
\begin{equation}\label{def_eq_1'}
\delta^2_\v(\pi_{n+1})=\half\renewcommand{\arraystretch}{0.7}
  \sum_{\begin{array}{c}\scriptstyle i+j=n+1\\
  \scriptstyle i,j\geq1\end{array}}\lb{\pi_i,\pi_j}_S.
\end{equation}
For $n=0$, it becomes
$\delta^2_\v(\pi_1)=0$ and, according to (\ref{defo_form'}), we have 
$$
\pi_1 = \renewcommand{\arraystretch}{0.7} 
\ds\sum_{(m,j)\in\N\times\E_{\v}} c^1_{m,j}\, \v^m\,u_j\vn\v 
\;+ \;\ds\sum_{s\in\{1,\dots,\mu-1\}} \bar c^1_s\, \vn u_s,
$$
which is an element of $Z^2(\A,\PB_\v)$.
Now, assume that $n\geq 1$ and let us prove that the skew-symmetric
biderivations $\pi_1,\pi_2,\dots,\pi_{n+1}$, defined by (\ref{defo_form'}),
satisfy the equation (\ref{def_eq_1'}).  
By using (\ref{equal1}), one obtains that
$\half\renewcommand{\arraystretch}{0.7}
  \sum\limits_{\begin{array}{c}\scriptstyle i+j=n+1\\\scriptstyle
      i,j\geq1\end{array}}\lb{\pi_i,\pi_j}_S$ 
consists of six types of sums, listed here: 
\begin{eqnarray}
1/2\quad \sum c^{\,a}_{l,i}\; \bc^{\,b}_r\; c^{\,c}_{m,j}\; \bc^{\,d}_s && 
     \lb{\v^l u_i\vn u_r, \v^m u_j\vn u_s}_S,   \label{sum1}\\
1/2\quad\sum c^{\,a}_{l,i}\;\bc^{\,b}_r\; c^{\,q}_{m,j} &&
     \lb{\v^l u_i \vn u_r,\v^m u_j \vn\v}_S,    \label{sum2}\\ 
1/2\quad \sum c^{\,c}_{l,i}\;\bc^{\,d}_r\; c^{\,p}_{m,j} &&
     \lb{\v^l u_i \vn u_r,\v^m u_j \vn\v}_S,    \label{sum3}\\ 
1/2\quad \sum \bc^{\,q}_r\; c^{\,a}_{l,i}\; \bc^{\,b}_s &&
     \lb{\v^l u_i \vn u_r, \vn u_s}_S           \label{sum4}\\
1/2\quad \sum c^{\,c}_{l,i}\; \bc^{\,d}_r\; \bc^{\,p}_s &&
    \lb{\v^l u_i \vn u_r, \vn u_s}_S            \label{sum5}\\
1/2 \sum (c^{\,p}_{l,i}\; \bc^{\,q}_r + c^{\,q}_{l,i}\; \bc^{\,p}_r) &&
    \lb{\v^l u_i \vn\v, \vn u_r}_S              \label{sum6}
\end{eqnarray}
where the sums are taken over the $a,b,c,d,p,q,r,s,l,m,i,j\in\N$
satisfying:
$$
\begin{array}{cc}
p+q = n+1; & l,m\in\N\\
a+b = p;\quad c+d = q ; & i,j\in\E_{\v}\\
a,b,c,d,p,q\geq 1; & 1\leq r,s \leq\mu-1.
\end{array}
$$
One can observe that for all family of indices 
$(a,b,c,d,p,q,r,s,l,m,i,j)$, satisfying the conditions above, the indices
$(a',b',c',d',p',q',r',s',l',m',i',j')$, defined by:
$$
\begin{array}{rcccl}
p'=b+c, && a'=c, && i'=j,\\
q'=a+d, && b'=b, && j'=i,\\
r'=r,   && c'=a, && l'=m,\\
s'=s,   && d'=d, && m'=l,
\end{array}
$$
satisfy the same conditions, so that, in the first sum (\ref{sum1}), one
finds the element
\begin{equation}
 \label{elmt1}
c^{\,a}_{l,i}\; \bc^{\,b}_r\; c^{\,c}_{m,j}\; \bc^{\,d}_s\;
\lb{\v^l u_i\vn u_r, \v^m u_j\vn u_s}_S
\end{equation}
and the element
\begin{equation*}
c^{\,a'}_{l',i'}\; \bc^{\,b'}_{r'}\; c^{\,c'}_{m',j'}\; \bc^{\,d'}_{s'}\;
\lb{\v^{l'} u_{i'}\vn  u_{r'}, \v^{m'} u_{j'}\vn u_{s'}}_S.
\end{equation*}%
By definition of the primed indices, this second term is then
equal to the element $c^{\,a}_{l,i}\; \bc^{\,b}_r\; c^{\,c}_{m,j}\; \bc^{\,d}_s\;\lb{\v^m
  u_j\vn  u_r, \v^l u_i\vn u_s}_S$, whose sum with (\ref{elmt1})
is zero, according to~(\ref{eq:beleq}). 
This fact proves that the first sum (\ref{sum1}) is equal to zero.
With analogous arguments, one finds that the sums (\ref{sum2}),
(\ref{sum3}), (\ref{sum4}), (\ref{sum5}) are also zero.
We have then obtained that 
$\half\renewcommand{\arraystretch}{0.7}
  \sum\limits_{\begin{array}{c}\scriptstyle i+j=n+1\\\scriptstyle
      i,j\geq1\end{array}}\lb{\pi_i,\pi_j}_S$ 
is just given by the sum
(\ref{sum6}), that is to say:
\begin{eqnarray}\label{eq:cekilfo1}
\half\renewcommand{\arraystretch}{0.7}
\sum_{\begin{array}{c}\scriptstyle i+j=n+1\\
 \scriptstyle i,j\geq1\end{array}}\lb{\pi_i,\pi_j}_S &=& 
\half \!\!\!\sum_{\begin{array}{c}\scriptstyle (l,i)\in\N\times\E_{\v}\\
 \scriptstyle r\in\{1,\dots,\mu-1\}\end{array}}\!\!\!
  \sum_{\begin{array}{c}\scriptstyle p+q=n+1\\\scriptstyle p,q\in\N^*\end{array}}
   (c^{\,p}_{l,i}\; \bc^{\,q}_r + c^{\,q}_{l,i}\; \bc^{\,p}_r) 
    \lb{\v^l u_i \vn\v, \vn u_r}_S\nonumber\\
&=& 
\sum_{\begin{array}{c}\scriptstyle (l,i)\in\N\times\E_{\v}\\
 \scriptstyle r\in\{1,\dots,\mu-1\}\end{array}}\!\!\!
\sum_{\begin{array}{c}\scriptstyle p+q=n+1\\\scriptstyle p,q\in\N^*\end{array}}
  c^{\,p}_{l,i}\; \bc^{\,q}_r \;\; \delta^2_{\pi_0}\left(\v^l u_i\vn u_r\right),
\end{eqnarray}
where, for the second equality, we have used (\ref{eqn:last}).
Now, let us consider $\delta^2_{\pi_0}(\pi_{n+1})$. According to Equation
(\ref{defo_form'}), for $k=n+1$, and Lemma \ref{rewriting_h2}, 
\begin{eqn}[2.3]{eq:ckilfo}
 \pi_{n+1} &\in& \renewcommand{\arraystretch}{0.7}
 \ds\sum_{\begin{array}{c}\scriptstyle (l,i)\in\N\times\E_{\v}\\
  \scriptstyle r\in\{1,\dots,\mu-1\}\end{array}}
    \ds \sum_{\begin{array}{c}\scriptstyle p+q=n+1\\\scriptstyle p,q\in\N^*\end{array}} 
        c^p_{l,i}\,\bar c^q_r\; \v^l\, u_i\, \vn u_r + Z^2(\A,\PB_\v).
\end{eqn}%
Combining the equations (\ref{eq:cekilfo1}) and (\ref{eq:ckilfo}), 
we obtain that (\ref{def_eq_1'}) holds, hence
the first and second parts of the proposition. For the part
(c), we use Proposition 4.5 of \cite{AP} to obtain that, if
$\w(\v)\not=|\w|$, then $H^1(\A,\PB_\v)$ is zero and we conclude with
the help of Proposition \ref{thm}. Part (d) follows finally from the fact
that Propositions \ref{magic} and \ref{thm} are also valid for $m$-th order deformations.
\end{proof}
This proposition leads to the following result:
\begin{cor}\label{extension_v}
Let $\v\in\F[x,y,z]$ be a weight homogeneous polynomial with an isolated
singularity. Then, for all $m\in\N^*$, every $m$-th order deformation of
$\PB_\v$ extends to a $(m+1)$-th order deformation of $\PB_\v$.
\end{cor}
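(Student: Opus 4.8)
\textbf{Proof proposal for Corollary~\ref{extension_v}.}

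The plan is to deduce the corollary at once from Proposition~\ref{prp:defo_A_v}, together with the elementary fact that extendibility of a deformation is invariant under equivalence, so no new cohomological computation is needed.

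First I would record that invariance. Suppose $\pi_{(m)}$ and $\pi'_{(m)}$ are equivalent $m$-th order deformations of $\pi_0$; by (the $m$-th order version of) Lemma~\ref{lma:equiv}, we have $\pi'_{(m)} = e^{\ad_\xi}(\pi_{(m)}) \mod \nu^{m+1}$ for some $\xi \in \Vect^1_0(\A^\nu)$. Assume $\pi'_{(m)}$ extends to an $(m+1)$-th order deformation $\pi'_{(m+1)}$. Fixing any representative of $\xi$ and truncating modulo $\nu^{m+2}$, the associated automorphism $e^\xi$ of $\A^\nu/\ideal{\nu^{m+2}}$ (which is the identity modulo $\nu$) induces, exactly as in the discussion preceding Lemma~\ref{lma:equiv}, the conjugation operator $e^{\ad_\xi}$, and conjugating the Poisson structure $\pi'_{(m+1)}$ by it yields a skew-symmetric biderivation of $\A^\nu/\ideal{\nu^{m+2}}$ still satisfying the Jacobi identity, whose reduction modulo $\nu^{m+1}$ is $\pi_{(m)}$. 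Hence $\pi_{(m)}$ also extends to an $(m+1)$-th order deformation.

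Now let $\pi_{(m)}$ be an arbitrary $m$-th order deformation of $\PB_\v$. By part~(d) of Proposition~\ref{prp:defo_A_v} there are families of constants $(c^k_{l,i})_{(l,i)\in\N\times\E_\v,\, k\in\N^*}$ and $(\bar c^{\,k}_r)_{1\leq r\leq\mu-1,\, k\in\N^*}$, with $c^k_{l,i} = \bar c^{\,k}_r = 0$ for $k\geq m+1$, such that $\pi_{(m)}$ is equivalent, modulo $\nu^{m+1}$, to the $m$-th order truncation of the explicit formal deformation $\pi_* = \PB_\v + \sum_{n\in\N^*}\pi_n\nu^n$ given by~(\ref{eq:defo_qcq}) and~(\ref{defo_form}). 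Since $\pi_*$ is a genuine formal deformation of $\PB_\v$, its truncation modulo $\nu^{m+2}$ is an $(m+1)$-th order deformation extending its truncation modulo $\nu^{m+1}$; by the invariance recorded above, $\pi_{(m)}$ therefore extends to an $(m+1)$-th order deformation of $\PB_\v$, which is the assertion.

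I do not expect a real obstacle: all the substantive work is already contained in the proof of Proposition~\ref{prp:defo_A_v}, where the vanishing relations~(\ref{equal1}) and~(\ref{eqn:last}) are used to write down, at every order, an explicit solution of the deformation equation~(\ref{eqn:eq_cohom}). The only point demanding a little care is the equivalence-invariance step, and there the subtlety is purely notational — one must observe that an equivalence morphism between $m$-th order deformations, being of the form $e^{\ad_\xi}$ modulo $\nu^{m+1}$, lifts to an operator of the same form modulo $\nu^{m+2}$, which is immediate since the components $\xi_1,\dots,\xi_m\in\Vect^1(\A)$ of $\xi$ already determine $e^{\ad_\xi}$ one order further.
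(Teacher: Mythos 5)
Your argument is correct and follows essentially the same route as the paper: both invoke part~(d) of Proposition~\ref{prp:defo_A_v} to replace the given $m$-th order deformation by an equivalent truncation of the explicit family, observe that this truncation extends by construction, and then transport the extension back by lifting the equivalence morphism to an automorphism of $\A^\nu/\ideal{\nu^{m+2}}$ and conjugating. The only difference is presentational — you isolate the equivalence-invariance of extendibility as a separate step and phrase the lift via $e^{\ad_\xi}$, whereas the paper lifts the morphism $\Phi$ directly — but the content is identical.
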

\begin{proof}
According to part (d) of Proposition \ref{prp:defo_A_v}, 
any $m$-th order deformation $\pi'_{(m)}$
of $\PB_\v$ is equivalent to an $m$-th order deformation of the form 
$\pi_{(m)} := \PB_\v + \sum_{n=1}^m \pi_n \nu^n$, where the
$\pi_n$ are defined as in (\ref{defo_form}). 
Let us denote by
$\Phi:\A^\nu/\ideal{\nu^{m+1}}\to\A^\nu/\ideal{\nu^{m+1}}$, the equivalence morphism
from $\pi_{(m)}$ to $\pi'_{(m)}$. Let us extend
$\Phi$ to an automorphism of~$(\A^\nu/\ideal{\nu^{m+2}},\cdot)$, in a natural way.

According to Proposition \ref{prp:defo_A_v}, we have that  
$\pi_{(m+1)} := \PB_\v + \sum_{n=1}^{m+1} \pi_n \nu^n$, where $\pi_{m+1}$
is defined with an analog of the formula (\ref{defo_form}), extends 
$\pi_{(m)}$ as an $(m+1)$-th order deformation. Then, the $(m+1)$-th order
deformation $\pi'_{(m+1)}$, defined by the formula 
$\pi'_{(m+1)}[F,G]
=\Phi\left(\pi_{(m+1)}[\Phi^{-1}(F),\Phi^{-1}(G)]\right) \mod\nu^{m+2}$ (for $F,G\in\A$
or $F,G\in\A^\nu/\ideal{\nu^{m+2}}$) extends $\pi'_{(m)}$ as an $(m+1)$-th order deformation.
\end{proof}
We point out that, in general, this property of extendibility of
deformations is not satisfied by an arbitrary
Poisson structure and the particular family of Poisson algebras
associated to weight homogeneous polynomials with an isolated singularity
$(\A,\PB_\v)$ has specific and nice properties of deformations.

\smallskip
Let us now consider the particular case where $\w(\v)=|\w|$, for which
we have 
$H^1(\A,\PB_\v)\simeq\F[\v]\vec{e}_\w$, according to Proposition 4.5 of \cite{AP}.
In this case, the part~(c) of Proposition \ref{prp:defo_A_v} and the
uniqueness of the constants $c^k_{l,i}$ and $\bar c^{\,k}_r$ do not
hold anymore. 
In particular, we will see that $\Phi=e^{\vec{e}_\w \nu}$, which is an
algebra morphism $\A^\nu\to\A^\nu$, equal to the identity modulo
$\nu$, is always an equivalence morphism between two different (except
in a very particular case) formal
deformations of the family given in Proposition \ref{prp:defo_A_v}. To
see that, assume $\w(\v)=|\w|$ and define $\xi := \vec{e}_\w \nu$ as being the element
$\xi = \w_1 x\,\nu\,\pp{}{x} + \w_2 y\,\nu\,\pp{}{y} + \w_3
z\,\nu\,\pp{}{z}\in\Vect^1_0(\A^\nu)$. Then take the formal deformation
$\pi_*$ of $\pi_0=\PB_\v$, given by two arbitrary families of constants 
$\left(c^a_{l,i}\right)_{{(l,i)\in\N\times\E_\v}\atop{a\in\N^*}}$ 
and $\left(\bar c^{\,b}_r\right)_{{1\leq r\leq \mu-1}\atop{b\in\N^*}}$
(with, for every $a_0,b_0\in\N^*$, only a finite number of non-zero $c^{a_0}_{l,i}$ and
$\bar c^{\,b_0}_r$)
and formulas (\ref{eq:defo_qcq}) and
(\ref{defo_form}) of Proposition \ref{prp:defo_A_v}. 
Let us denote by $\pi'_*$ the formal deformation of $\pi_0$ given by
$\pi'_* :=e^{\ad_\xi}(\pi_*)$. According to Lemma \ref{lma:equiv}, 
the deformation $\pi'_*$ is equivalent
to $\pi_*$ and $\Phi=e^{\xi}$ is an equivalence morphism from $\pi_*$
to $\pi'_*$.
Then a direct
computation (using Euler Formula
(\ref{euler})) shows that $\pi'_*$ is also given by 
$\pi'_*=\pi_0 + \sum_{n\in\N^*} \pi'_n\nu^n$, where, for all
$n\in\N^*$,
\begin{eqn*}[2.3]
 \pi'_n &=& \renewcommand{\arraystretch}{0.7}
 \ds\sum_{\begin{array}{c}\scriptstyle (l,i)\in\N\times\E_{\v}\\
 \scriptstyle s\in\{1,\dots,\mu-1\}\end{array}}
    \ds \sum_{\begin{array}{c}\scriptstyle a+b=n\\\scriptstyle a,b\in\N^*\end{array}} 
        c'^a_{l,i}\,\bar c'^{\,b}_s\, \v^l\, u_i\, \vn u_s  \\
&+& \renewcommand{\arraystretch}{0.7} 
\ds\sum_{(m,j)\in\N\times\E_{\v}} c'^n_{m,j}\, \v^m\,u_j\vn\v 
\;+ \;\ds\sum_{s\in\{1,\dots,\mu-1\}} \bar c'^{\,n}_s\, \vn u_s,
\end{eqn*}%
with, for $n\in\N^*$, $(l,i)\in\N\times\E_\v$ and $1\leq r\leq \mu-1$,
\begin{equation*}
\renewcommand{\arraystretch}{0.7} 
c'^n_{l,i} := \sum_{\begin{array}{c}\scriptstyle k+r=n\\\scriptstyle k,r\in\N^*\end{array}}
 \frac{1}{r!} c^k_{l,i} (|\w|(l-1)-\w(u_i))^r
\end{equation*}
and
\begin{equation*}
\renewcommand{\arraystretch}{0.7} 
\bar c'^{\,n}_s := \sum_{\begin{array}{c}\scriptstyle k+r=n\\\scriptstyle k,r\in\N^*\end{array}}
 \frac{1}{r!} \bar c^{\,k}_s (\w(u_s)-|\w|)^r.
\end{equation*}
Moreover, $\pi'_*=\pi_*$, if and only if, $c^n_{l,i}=0$, for all
$(l,i)\in(\N\times\E_{\v})-\{(0,0)\}$ and $\bar c^{\,k}_s =0$, for all
$1\leq s\leq\mu-1$ such that $\w(u_s)\not=|\w|$. So that,
$\pi'_*=\pi_*$ if and only if $\pi_*$ is of the form:
$$
\renewcommand{\arraystretch}{0.7} 
\pi_* = \pi_0 + \sum_{n\in\N^*}
\left(\sum_{\begin{array}{c}\scriptstyle a+b=n\\\scriptstyle a,b\in\N^*\end{array}}\!\!
\sum_{\begin{array}{c}\scriptstyle s=1\\\scriptstyle \w(u_s)=|\w|\end{array}}^{\mu-1}\!\!
c^a_{0,0}\,\bar c^{\,b}_s\,\vn u_s + c^n_{0,0}\,\vn\v + \!\!
\sum_{\begin{array}{c}\scriptstyle t=1\\\scriptstyle \w(u_t)=|\w|\end{array}}^{\mu-1}\!\!
\bar c^{\,n}_t\vn u_t
 \right)\nu^n,
$$
i.e., $\pi_*$ is a weight-homogeneous formal deformation of $\pi_0$ of
(weighted) degree equal to zero, 
in other words, each $\pi_n$ is a weight-homogeneous biderivation of
weighted degree equal to zero, for all $n\in\N$. (For more information about
weight-homogeneous biderivations, see \cite{PLV}).

\subsection{Properties of the formal deformations of $\PB_\v$}
\label{sbsec:prps_v}

As in Proposition~\ref{prp:defo_A_v}, we have obtained an explicit
expression for the formal deformations of the Poisson bracket
$\PB_\v$, we will now be able
to give some properties of these deformations, when $\v\in\F[x,y,z]$ is
supposed to be weight homogeneous with an isolated singularity. 
First, we obtain the following:
\begin{prp}\label{prp:writing_chi_v}
Let $\v\in\A=\F[x,y,z]$ be a weight homogeneous polynomial with an isolated
singularity. Consider the Poisson algebra $(\A,\PB_\v)$ associated
to~$\v$, where $\PB_\v$ is the Poisson bracket given by
$
\PB_\v=\pp{\v}{x}\; \pp{}{y}\wedge\pp{}{z} +
  \pp{\v}{y}\; \pp{}{z}\wedge\pp{}{x} +
  \pp{\v}{z}\; \pp{}{x}\wedge\pp{}{y}.
$ 
Then, for every formal deformation $\pi'_*$ of
$\PB_\v$, there exist 
$\chi^\nu, \v^\nu\in \A^\nu$, such that $\pi'_*$ is
equivalent to the formal deformation $\pi_*=\chi^\nu \,\vn {\v^\nu}$.
\end{prp}
\begin{proof}
According to Proposition \ref{prp:defo_A_v}, an arbitrary formal
deformation $\pi'_*$ of $\PB_\v$ is equivalent to a formal
deformation $\pi_*$, of the form:
\begin{eqnarray*}
\pi_* = \PB_\v + \sum_{n\in\N^*} \pi_n \nu^n,
\end{eqnarray*}
with $\pi_n$ given by (\ref{defo_form}) for all $n\in\N^*$,
where the elements $c^k_{l,i}$ and $\bar c^{\,k}_r$ (with
$k\in\N^*$, 
$(l,i)\in\N\times\E_{\v}$ and $1\leq r\leq\mu-1$) are constants in
$\F$ (and for every $a,b\in\N^*$, only a finite
number of non-zero $c^a_{l,i}$ and $\bar c^{\,b}_r$).
It is easy to verify that the elements of $\A^\nu$, defined by
\begin{eqnarray*}
\chi^\nu := 
1 + \sum_{a\in\N^*}\left(\sum_{(l,i)\in\N\times\E_{\v}} c^a_{l,i}\v^l u_i\right)\nu^a
\quad
\hbox{ and } \quad
\v^\nu := \v +\sum_{b\in\N^*}\left(\sum_{r=1}^{\mu-1} \bar c^{\,b}_r u_r  \right)\nu^b,
\end{eqnarray*}
satisfy the identity $\pi_*=\chi^\nu\,\vn\v^\nu
\in(\A^\nu)^3\simeq\Vect^2(\A^\nu)$, 
so that $\pi'_*$ is equivalent to a deformation of the desired form.
\end{proof}
\begin{rem}
It is easy to verify that, on $\F^3$, the multiplication of a Poisson
structure $\PB$ by any polynomial $\chi\in\F[x,y,z]$ gives another
Poisson structure $\chi\PB$. We point out that this fact is in general
not true in other dimensions. In particular, for every
$\chi,\v\in\F[x,y,z]$, the skew-symmetric biderivation $\chi\PB_\v$
(identified to $\chi\vn\v\in\A^3$) is a Poisson structure on
$\F^3$. In the previous proposition~\ref{prp:writing_chi_v}, we have
seen that, morally, if one deforms a Poisson structure of the family
$(\PB_\v\simeq \vn\v \,\mid\,\v\in\A)$, one obtains a Poisson
structure on $\A^\nu$ which belongs to the family
$(\chi^\nu\PB_{\v^\nu} \simeq\chi^\nu\vn\v^\nu \,\mid\,
\chi^\nu,\v^\nu\in\A^\nu)$.
\end{rem}
The following corollary gives another property verified by the formal
deformations of $\PB_\v$.
\begin{cor}\label{casimir_v}
Let $\v\in\A=\F[x,y,z]$ be a weight homogeneous polynomial with an isolated
singularity. Consider the Poisson algebra $(\A,\PB_\v)$ associated
to~$\v$. Every formal deformation of $\PB_\v$ admits a formal Casimir.
\end{cor}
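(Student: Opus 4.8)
The plan is to leverage Proposition~\ref{prp:writing_chi_v} directly. By that proposition, any formal deformation $\pi'_*$ of $\PB_\v$ is equivalent to a formal deformation of the form $\pi_* = \chi^\nu\,\vn{\v^\nu}$, where $\chi^\nu,\v^\nu\in\A^\nu$ and $\v^\nu = \v + O(\nu)$. Since a formal Casimir of $\pi'_*$ can be pulled back through the equivalence morphism to give a formal Casimir of $\pi_*$ (and conversely), it suffices to exhibit a formal Casimir for $\pi_* = \chi^\nu\,\vn{\v^\nu}$.

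First I would observe that, under the identification $\Vect^2(\A^\nu)\simeq(\A^\nu)^3$, a Casimir of a biderivation of the form $\vec{F}\in(\A^\nu)^3$ is a function $G\in\A^\nu$ with $\vec{F}\times\vn G = 0$; more precisely, for $\pi_* = \chi^\nu\,\vn{\v^\nu}$, one computes, using formula~(\ref{delta_i}) for $\de^0$ extended by $\F^\nu$-linearity, that
$$
\de^0_{\pi_*}(G) = \vn G \times (\chi^\nu\,\vn{\v^\nu}) = \chi^\nu\,(\vn G\times\vn{\v^\nu}).
$$
Taking $G = \v^\nu$ makes this vanish identically, since $\vn{\v^\nu}\times\vn{\v^\nu} = 0$. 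Hence $\v^\nu$ is a formal Casimir of $\pi_* = \chi^\nu\,\vn{\v^\nu}$: it lies in $\A^\nu$, it reduces to $\v$ modulo $\nu$ (so it is a genuine deformation of the Casimir $\v$ of $\PB_\v$), and it is central for $\pi_*$.

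Finally, I would transport this back: if $\Phi:(\A^\nu,\pi_*)\to(\A^\nu,\pi'_*)$ is the equivalence morphism provided by Proposition~\ref{prp:writing_chi_v} (a Poisson morphism equal to the identity modulo $\nu$), then $\Phi(\v^\nu)\in\A^\nu$ satisfies $\pi'_*[\Phi(\v^\nu),\Phi(G)] = \Phi(\pi_*[\v^\nu,G]) = 0$ for all $G\in\A^\nu$, so $\Phi(\v^\nu)$ is a formal Casimir of $\pi'_*$ (and it equals $\v$ modulo $\nu$). This proves the corollary. There is essentially no obstacle here: the entire content has already been extracted into Proposition~\ref{prp:writing_chi_v}, and the only thing to check is the elementary fact that $\vn H$ always annihilates $H$ under the cross product, i.e.\ that the "potential" $\v^\nu$ of a Poisson structure of the form $\chi^\nu\vn{\v^\nu}$ on $\F^3$ is automatically a Casimir — which is just the statement that $\PB_\v$ has $\v$ as a Casimir, applied over the ground ring $\F^\nu$.
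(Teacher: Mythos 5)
Your proposal is correct and follows essentially the same route as the paper: reduce via Proposition~\ref{prp:writing_chi_v} to a deformation of the form $\chi^\nu\,\vn{\v^\nu}$, check that $\v^\nu$ is a Casimir because $\vn{\v^\nu}\times\vn{\v^\nu}=0$ (the paper just spells this antisymmetry out coefficientwise in $\nu$), and transport the Casimir through the equivalence morphism $\Phi$.
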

\begin{proof}
First, let us consider a formal deformation of $\pi_0$, supposed to be of the form
$\pi_*=\chi^\nu\vn \v^\nu$, where $\chi^\nu,\v^\nu\in\A^\nu$ and let
us show that $\v^\nu$ is then a formal Casimir for $\pi_*$. 
Under the identifications $\Vect^2(\A^\nu) \simeq (\A^\nu)^3$ and 
$\Vect^1(\A^\nu) \simeq (\A^\nu)^3$, we
indeed have $\pi_*[\v^\nu,\,\cdot] = \left(\chi^\nu\vn\v^\nu\right) \times \vn\v^\nu$, which is
equal to zero, as, by writing $\chi^\nu=\sum_{i\in\N}\chi_i\nu^i$ and 
$\v^\nu:=\sum_{j\in\N}\v_j\nu^j$, where $\chi_i,\v_j\in\A$, we have:
\begin{equation*}
\left(\chi^\nu\vn\v^\nu\right) \times \vn\v^\nu 
= \sum_{i\in\N}\sum_{l\in\N}\chi_i \left(\sum_{j+k=l}\vn\v_j\times\vn\v_k\right) \nu^{i+l}
\end{equation*}
where, for each $l\in\N$, 
the sum $\ds\sum_{j+k=l}\vn\v_j\times\vn\v_k$ is equal to zero, because
$\vn\v_j\times\vn\v_k = - \vn\v_k\times\vn\v_j$. 
Now, according to Proposition \ref{prp:writing_chi_v}, any formal
deformation $\pi'_*$ of $\PB_\v$ is equivalent to a formal deformation of
the form $\pi_*=\chi^\nu \vn\v^\nu$, where
$\chi^\nu,\v^\nu\in\A^\nu$. Then, there exists a morphism of Poisson algebras 
$\Phi :(\A^\nu,\pi_*) \to (\A^\nu,\pi'_*)$ which is the
identity modulo~$\nu$. Thus, $\Phi$ is invertible and, for any $F\in\A^\nu$, we have
$$
\pi'_*[\Phi(\v^\nu),F] = \Phi\left(\pi_*[\v^\nu,\Phi^{-1}(F)]\right)=0.
$$
Hence the fact that
$\Phi(\v^\nu)$ is a formal
Casimir for~$\pi'_*$.
\end{proof}
\subsection{The case of singular surfaces in $\F^3$}
\label{sbssec:A_v}

In this last paragraph, we study singular surfaces in $\F^3$, equipped
with Poisson structures, as regular as possible and, as in the other cases
above, we give an explicit expression for all formal deformations of
these Poisson brackets, up to equivalence.

As previously, $\v\in\F[x,y,z]$ still denotes a weight
homogeneous polynomial with an isolated singularity and the weights of
the three variables $x,y,z$ are still denoted by $\w_1,\w_2,\w_3$,
while their sum is $|\w|=\w_1+\w_2+\w_3$. To such a polynomial,
one can associate a surface $\Fe_\v$ in $\F^3$ whose singular locus is exactly the
set $\lbrace \pp{\v}{x}=\pp{\v}{y}=\pp{\v}{z}=0\rbrace$. In fact, this
singular surface is given by the zero locus of $\v$,
$\Fe_\v:\lbrace\v=0\rbrace$. This affine space is equipped with its algebra
of regular functions $\A_\v:=\ds\frac{\F[x,y,z]}{\ideal{\v}}$. 

In Remark \ref{rmk:coboundary}, we pointed out
that $\v$ is a Casimir for the Poisson structure $\PB_\v$ defined in
(\ref{bracket_phi}), that is to say, is an element of the center of the bracket
$\PB_\v$. Hence, the Poisson bracket $\PB_\v$ goes to the quotient algebra
$\A_\v$ and it induces a bracket $\PB_{\A_\v}$ on $\A_\v$ that is obviously
a Poisson bracket. 

In this paragraph, our purpose is to study the formal deformations of this
Poisson structure. First, as proved in Proposition $5.2$ of \cite{AP}, we
have $\Vect^3(\A_\v)\simeq \lbrace
0\rbrace$, so that $H^3(\A_\v,\PB_{\A_\v})\simeq\lbrace 0\rbrace$ and,
according to the equations (\ref{eqn:eq_cohom}) which govern the
extendibility of deformations, every $m$-th order deformation
$\PB_{\A_\v} + \pi_1\nu +\cdots +\pi_m\nu^m$
of $\PB_{\A_\v}$ ($m\in\N^*$) extends to a $(m+1)$-th order
deformation
$\PB_{\A_\v} + \pi_1\nu +\cdots +\pi_m\nu^m + \pi_{m+1}\nu^{m+1}$, by
choosing for $\pi_{m+1}$, any Poisson $2$-cocycle of $(\A_\v,\PB_{\A_\v})$. 

In Proposition 5.6 of \cite{AP}, we have obtained that the family $\lbrace \wp(u_j \vn\v), 0\leq
j\leq \mu-1\mid \w(u_j)=\w(\v)-|\w|\rbrace$, where $\mu$ is the Milnor number
of $\v$ and $\wp:\F[x,y,z]\to\A_\v$ is the natural projection, gives an $\F$-basis of
the second Poisson cohomology space of $(\A_\v,\PB_{\A_\v})$. 
Since $H^3(\A_\v,\PB_{\A_\v})\simeq\lbrace 0\rbrace$, a simple case of
Proposition~\ref{magic}, in which the skew-symmetric biderivations
$\Psi_n^\aa$ can be chosen as being zero, leads to the following
result (also valid for $m$-th order deformations of $\PB_{\A_\v}$).
\begin{prp}\label{prp:defo_Av}
Let $\v\in\F[x,y,z]$ be a weight homogeneous polynomial with an isolated
singularity. 
Consider the Poisson algebra $(\A_\v,\PB_{\A_\v})$ and denote by 
$\K=\lbrace j\in\lbrace 0,\dots,\mu-1\rbrace\mid
\w(u_j)=\w(\v)-|\w|\rbrace$. We have the following:
\begin{enumerate}
\item For every family of constants
$\left(\a^n_j\in\F\right)_{{j\in\K}\atop{n\in\N^*}}$, the formula 
\begin{eqnarray}\label{eq:defo_qcq2'}
\pi_* = \PB_{\A_\v} + \sum_{n\in\N^*} \left(\renewcommand{\arraystretch}{0.7} 
\ds\sum_{\begin{array}{c}\scriptstyle j=0\\
  \scriptstyle \w(u_j)=\w(\v)-|\w|\end{array}}^{\mu-1} \a_j^n\, \wp(u_j \vn\v)\right) \nu^n
\end{eqnarray}
defines a formal deformation of $\PB_{\A_\v}$.

\bigskip
\item For any formal deformation $\pi'_*$ of $\PB_{\A_\v}$, 
there exists a family of constants 
$\left(\a^n_j\right)_{{j\in\K}\atop{n\in\N^*}}$,
such that $\pi'_*$ is equivalent to the formal
deformation $\pi_*$ given by the above formula \emph{(\ref{eq:defo_qcq2'})}.
\end{enumerate}
\end{prp}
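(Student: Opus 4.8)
The plan is to obtain the statement as a direct application of Proposition \ref{magic} to the Poisson algebra $(\A_\v,\PB_{\A_\v})$, feeding it two facts established in \cite{AP}: that $\Vect^3(\A_\v)\simeq\lbrace 0\rbrace$ (Proposition 5.2 of \cite{AP}), and that the cohomology classes of the biderivations $\wp(u_j\vn\v)$, for $j\in\K$, form an $\F$-basis of $H^2(\A_\v,\PB_{\A_\v})$ (Proposition 5.6 of \cite{AP}).

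I would first prove item (1). Since $\Vect^3(\A_\v)\simeq\lbrace 0\rbrace$, the Schouten bracket of any two skew-symmetric biderivations of $\A_\v$, which a priori lies in $\Vect^3(\A_\v)$, must vanish. In particular $\lb{\PB_{\A_\v},\PB_{\A_\v}}_S=0$, so $\PB_{\A_\v}$ is indeed a Poisson structure, and for a formal series $\pi_*=\PB_{\A_\v}+\sum_{n\in\N^*}\pi_n\nu^n$ with $\pi_n\in\Vect^2(\A_\v)$, every bracket $\lb{\pi_i,\pi_j}_S$ occurring on the right-hand side of the deformation equations \emph{(\ref{eqn:eq_cohom})} is zero. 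Hence \emph{(\ref{eqn:eq_cohom})} reduces to $\delta^2_{\PB_{\A_\v}}(\pi_{n+1})=0$ for all $n$, i.e.\ the sole condition on $\pi_*$ is that each $\pi_n$ be a Poisson $2$-cocycle. As each $\wp(u_j\vn\v)$ is a $2$-cocycle, any $\F$-linear combination $\sum_{j\in\K}\a^n_j\,\wp(u_j\vn\v)$ is one too, so formula \emph{(\ref{eq:defo_qcq2'})} defines a formal deformation of $\PB_{\A_\v}$; truncating modulo $\nu^{m+1}$ gives the corresponding $m$-th order statement.

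For item (2), I would invoke Proposition \ref{magic} with the index set $\K=\lbrace j\in\lbrace 0,\dots,\mu-1\rbrace\mid\w(u_j)=\w(\v)-|\w|\rbrace$, the $2$-cocycles $\vartheta_j:=\wp(u_j\vn\v)$ (whose classes form a basis of $H^2(\A_\v,\PB_{\A_\v})$ by Proposition 5.6 of \cite{AP}), and $\Psi^\aa_n:=0$ for every $n\in\N^*$ and every $\aa\in\S$. The three hypotheses of Proposition \ref{magic} then hold trivially: $\Psi^\aa_1=0$; each $\Psi^\aa_n$ depends only on the earlier coefficients (being identically zero); and $\pi^\aa_*=\PB_{\A_\v}+\sum_{n\in\N^*}\bigl(\sum_{j\in\K}a^n_j\vartheta_j\bigr)\nu^n$ is a formal deformation by item (1). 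Proposition \ref{magic}(a) then gives that every formal deformation of $\PB_{\A_\v}$ is equivalent to some $\pi^\aa_*$, which is exactly the claimed normal form, and Proposition \ref{magic}(b) yields the $m$-th order version.

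There is no substantial obstacle here: the argument rests entirely on the observation that the vanishing of $\Vect^3(\A_\v)$ simultaneously trivializes all obstruction and extension equations, after which the proof is pure bookkeeping, matching the data to the hypotheses of Proposition \ref{magic}. The only mild point of care is that the $\wp(u_j\vn\v)$ genuinely descend to well-defined skew-symmetric biderivations of the quotient algebra $\A_\v$ and are $2$-cocycles there — but this is precisely what Proposition 5.6 of \cite{AP} supplies, so no additional computation is required.
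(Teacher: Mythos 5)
Your proof is correct and follows essentially the same route as the paper: the vanishing of $\Vect^3(\A_\v)$ kills all the Schouten brackets on the right-hand side of the deformation equations (\ref{eqn:eq_cohom}), so that one may take $\Psi_n^\aa=0$ in Proposition \ref{magic} with the $2$-cocycles $\wp(u_j\vn\v)$ as the $\vartheta_k$. No gaps; the matching of data to the hypotheses of Proposition \ref{magic} is exactly what the paper does.
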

\begin{rem}
According to Proposition 5.5 of \cite{AP}, we have 
$$
\renewcommand{\arraystretch}{0.7}
H^1(\A_\v,\PB_{\A_\v}) \simeq \bigoplus_{\begin{array}{c}\scriptstyle j=0\\
  \scriptstyle \w(u_j)=\w(\v)-|\w|\end{array}}^{\mu-1} \F\,\wp(u_j \vec{e}_\w),
$$
which is zero if and only if $H^2(\A_\v,\PB_{\A_\v})$ is also zero and,
according to the previous proposition \ref{prp:defo_Av}, all
formal deformations of $\PB_{\A_\v}$ are in this case trivial (i.e.,
equivalent to $\PB_{\A_\v}$). In the previous case, considered in Paragraph
\ref{defo_dim3}, we have considered the
algebra morphism $\Phi=e^{\vec{e}_\w\,\nu}$, in the
case the Euler derivation $\vec{e}_\w$ was defining a non-trivial cohomological class in
the first Poisson cohomology space. Here, the derivation $\vec{e}_\w$ defines such a non-trivial
class, if and only if, $\w(\v)=|\w|$, but, in this case, according to
Proposition \ref{prp:defo_Av}, all formal deformations of
$\PB_{\A_\v}$ are equivalent to a formal deformation of the form:
$$
\pi_*= \PB_{\A_\v} + \sum_{n\in\N^*}\a_0^n \,\wp(\vn \v)\nu^n,
$$
where $\a_0^n\in\F$, for all $n\in\N^*$, 
and the algebra morphism $\Phi=e^{\xi}$, defined above (with
$\xi:=\vec{e}_\w\nu$) is an equivalence morphism from such a
$\pi_*$ to
$$
\pi'_* := e^{\ad_\xi}(\pi_*) = \PB_{\A_\v},
$$
because $\lb{\vec{e}_\w,\vn\v}_S=0$. So that, if $\w(\v)=|\w|$, the
Poisson structure $\PB_{\A_\v}$ is rigid, i.e., all its formal
deformations are equivalent to $\PB_{\A_\v}$ itself.
\end{rem}
\begin{rem}
The limit case where the surface in $\F^3$ is the plane $\F^2$,
equipped with its algebra of polynomial functions $\F[x,y]$ is studied
in the same way. Every Poisson structure is in this case of
the form $\PB^\psi = \psi\,\pp{}{x}\we\pp{}{y}$, with $\psi\in\F[x,y]$.

In \cite{monnier}, one finds explicit bases for the Poisson cohomology spaces in
dimension two, for the germified case, while, in \cite{r_v}, one finds the
dimensions of the Poisson cohomology spaces of the Poisson variety
$(\F[x,y],\PB^\psi)$, in the algebraic setting.

We now suppose that the polynomial $\psi\in\F[x,y]$
is a weight homogeneous polynomial of (weighted) degree $\w(\psi)$,
associated to the weights of the two variables $x$ and $y$,
denoted respectively by $\w_1$ and $\w_2$.  The methods used in
\cite{monnier} can be applied in the algebraic context and in
particular permit to obtain, when
$\psi\in\F[x,y]$ is a weight homogeneous square-free polynomial, the
following:
\begin{equation}\label{eq:H2_F2}
H^2(\F[x,y],\PB^\psi) \simeq \;\F[x,y]_{N(\psi)}\;\PB^\psi 
    \oplus
    \;\frac{\ds\F[x,y]}{\ds\Bigl\langle\pp{\psi}{x},\pp{\psi}{y}\Bigr\rangle}\,
    \pp{}{x}\we\pp{}{y},
\end{equation}
where $\F[x,y]_{N(\psi)}$ is the $\F$-vector space of all weight
homogeneous polynomials in $\F[x,y]$, of (weighted) degree equal to
$N(\psi):=\w(\psi)-\w_1-\w_2$.
As in the case of the Poisson algebra $(\A_\v,\PB_{\A_\v})$, this
explicit basis leads to an explicit writing of the formal / $m$-th
order deformations of $\PB^\psi$.
\end{rem}
\section{Final Remarks}
\begin{enumerate}
\item We recall the result of M. Kontsevich, stated in the
  introduction and saying that, for a Poisson manifold $(M,\PB)$,
  there is a correspondence between the equivalence classes of the
  formal deformations of $\PB$ and those of the associative product of
  $\Fe(M)$, which have as a first order term the Poisson bracket
  $\PB$. 
Considering this, a natural
extension of the results given here would be to consider the equivalence
classes of the formal deformations of the associative algebra
$\A=\F[x,y,z]$ which have as first order term a Poisson bracket of the
form $\PB_\v$, with $\v\in\A$, and compare them to the equivalence classes
of the formal deformations of the Poisson structure $\PB_\v$, obtained
in this paper. 
We hope to come back to this in a future publication.
\item After obtaining these results of deformation of the Poisson
  structures of the form $\PB_\v$, $\v\in\F[x,y,z]$, B. Fresse pointed out to me that they
could come from a $L_\infty$-equivalence between two
$L_\infty$-algebras. This other point of view 
opens new perspectives of research, which we plan to explore in the future.
\item 
In their paper (\cite{Ginzburg-Etingof}), P. Etingof and V. Ginzburg consider ``deformations''
of Poisson algebras, but with the meaning that the associative product
\emph{and} the Poisson bracket are simultaneously deformed. To do
that, they use a notion of ``Poisson cohomology'' which is the one
defined in \cite{Fr1}, \cite{Fr2}, \cite{Ginzburg-Kaledin} 
and is different from the one used in \cite{AP} and in
the present paper. It would be interesting to compare the present paper with the
one of P. Etingof and V. Ginzburg. 
\end{enumerate}

\bibliographystyle{plain}
\bibliography{ref}
%


\end{document}